\newtheorem{theorem}{Theorem}
\newtheorem{corollary}[theorem]{Corollary}
\newtheorem{lemma}[theorem]{Lemma}
\newenvironment{proof}[1][Proof.]{\begin{trivlist}
\item[\hskip \labelsep {\bfseries #1}]}{\end{trivlist}}
\newenvironment{acknowledgement}[1][Acknowledgement]{\begin{trivlist}
\item[\hskip \labelsep {\bfseries #1}]}{\end{trivlist}}
\newcommand{\AmS}{{\protect\the\textfont2
  A\kern-.1667em\lower.5ex\hbox{M}\kern-.125emS}}
\title{Interval edge-colorings of composition of
graphs}
\author{P.A. Petrosyan\address[MCSD]{Department of Informatics and Applied Mathematics,\\
Yerevan State University, 0025, Armenia}%
\address{Institute for Informatics and Automation Problems,\\
National Academy of Sciences, 0014, Armenia}%
\thanks {email: pet\_petros@ipia.sci.am},
        H.H. Tepanyan
\address{Stanford University, Stanford, CA 94305, United States}%
\thanks{email: tehayk@stanford.edu}}
\begin{document}

\maketitle

\begin{abstract}
An edge-coloring of a graph $G$ with consecutive integers
$c_{1},\ldots,c_{t}$ is called an \emph{interval $t$-coloring} if
all colors are used, and the colors of edges incident to any vertex
of $G$ are distinct and form an interval of integers. A graph $G$ is
interval colorable if it has an interval $t$-coloring for some
positive integer $t$. The set of all interval colorable graphs is
denoted by $\mathfrak{N}$. In 2004, Giaro and Kubale showed that if
$G,H\in \mathfrak{N}$, then the Cartesian product of these graphs
belongs to $\mathfrak{N}$. In the same year they formulated a
similar problem for the composition of graphs as an open problem.
Later, in 2009, the first author showed that if $G,H\in
\mathfrak{N}$ and $H$ is a regular graph, then $G[H]\in
\mathfrak{N}$. In this paper, we prove that if $G\in \mathfrak{N}$
and $H$ has an interval coloring of a special type, then $G[H]\in
\mathfrak{N}$. Moreover, we show that all regular graphs, complete
bipartite graphs and trees have such a special interval coloring. In
particular, this implies that if $G\in \mathfrak{N}$ and $T$ is a
tree, then $G[T]\in \mathfrak{N}$.\\

Keywords: edge-coloring, interval coloring, composition of graphs,
complete bipartite graph, tree.
\end{abstract}

\section{Introduction}\

All graphs considered in this paper are finite, undirected, and have
no loops or multiple edges. Let $V(G)$ and $E(G)$ denote the sets of
vertices and edges of $G$, respectively. For a graph $G$, by
$\overline G$ we denote the complement of the graph $G$. The degree
of a vertex $v\in V(G)$ is denoted by $d_{G}(v)$, the maximum degree
of $G$ by $\Delta(G)$, and the chromatic index of $G$ by
$\chi^{\prime}(G)$. The terms and concepts that we do not define can
be found in \cite{AsrDenHag,HamImrichKlavzar,Kubale,West}.

A proper edge-coloring of a graph $G$ is a coloring of the edges of
$G$ such that no two adjacent edges receive the same color. A proper
edge-coloring of a graph $G$ with consecutive integers
$c_{1},\ldots,c_{t}$ is an \emph{interval $t$-coloring} if all
colors are used, and the colors of edges incident to each vertex of
$G$ are form an interval of integers. A graph $G$ is \emph{interval
colorable} if it has an interval $t$-coloring for some positive
integer $t$. The set of all interval colorable graphs is denoted by
$\mathfrak{N}$. The concept of interval edge-coloring of graphs was
introduced by Asratian and Kamalian \cite{AsrKam} in 1987. In
\cite{AsrKam}, they proved that if $G\in \mathfrak{N}$, then
$\chi^{\prime}\left(G\right)=\Delta(G)$. Asratian and Kamalian also
proved \cite{AsrKam,AsrKamJCTB} that if a triangle-free graph $G$
admits an interval $t$-coloring, then $t\leq \left\vert
V(G)\right\vert -1$. In \cite{Kampreprint,KamDiss}, Kamalian
investigated interval colorings of complete bipartite graphs and
trees. In particular, he proved that the complete bipartite graph
$K_{m,n}$ has an interval $t$-coloring if and only if
$m+n-\gcd(m,n)\leq t\leq m+n-1$, where $\gcd(m,n)$ is the greatest
common divisor of $m$ and $n$. In \cite{PetDM}, Petrosyan
investigated interval colorings of complete graphs and hypercubes.
In particular, he proved that if $n\leq t\leq
\frac{n\left(n+1\right)}{2}$, then the hypercube $Q_{n}$ has an
interval $t$-coloring. Later, in \cite{PetKhachTan}, it was shown
that the hypercube $Q_{n}$ has an interval $t$-coloring if and only
if $n\leq t\leq \frac{n\left(n+1\right)}{2}$. In \cite{Seva},
Sevast'janov proved that it is an $NP$-complete problem to decide
whether a bipartite graph has an interval coloring or not. In papers
\cite{AsrKam,AsrKamJCTB,GiaroKubale1,GiaroKubale2,Hansen,Kampreprint,KamDiss,Kubale,PetDM,PetKarapet,PetKhachTan,PetKhachYepTan,Seva},
the problems of existence, construction and estimating the numerical
parameters of interval colorings of graphs were investigated.
Surveys on this topic can be found in some books
\cite{AsrDenHag,JensenToft,Kubale}.

Graph products \cite{HamImrichKlavzar} were first introduced by
Berge \cite{Berge}, Sabidussi \cite{Sabidussi}, Harary \cite{Harary}
and Vizing \cite{Vizing}. In particular, Sabidussi \cite{Sabidussi}
and Vizing \cite{Vizing} showed that every connected graph has a
unique decomposition into prime factors with respect to the
Cartesian product. In the same direction there are also many
interesting problems of decomposing of the different products of
graphs into Hamiltonian cycles. In particular, in \cite{BaranSas} it
was proved Bermond's conjecture that states: if two graphs are
decomposable into Hamiltonian cycles, then their composition is
decomposable, too. A lot of work was done on various topics related
to graph products, on the other hand there are still many questions
open. For example, it is still open Hedetniemi's conjecture
\cite{Hedetniemi}, Vizing's conjecture \cite{Vizingproblems} and the
conjecture of Harary, Kainen and Schwenk \cite{HarKainSchewk}.

There are many papers
\cite{HimmelWilliam,Jaradat,Kotzig,Mahamoodian,MoharPisanski,Mohar,PisanSHTMohar,Zhou}
devoted to proper edge-colorings of various products of graphs,
however very little is known on interval colorings of graph
products. Interval colorings of Cartesian products of graphs were
first investigated by Giaro and Kubale \cite{GiaroKubale1}. In
\cite{GiaroKubale2}, Giaro and Kubale proved that if $G,H\in
\mathfrak{N}$, then $G\square H\in \mathfrak{N}$. In 2004, they
formulated \cite{Kubale} a similar problem for the composition of
graphs as an open problem. In 2009, the first author \cite{PetDMGT}
showed that if $G,H\in \mathfrak{N}$ and $H$ is a regular graph,
then $G[H]\in \mathfrak{N}$. Later, Yepremyan \cite{PetKhachYepTan}
proved that if $G$ is a tree and $H$ is either a path or a star,
then $G[H]\in \mathfrak{N}$. Some other results on interval
colorings of various products of graphs were obtained in
\cite{Kubale,PetDMGT,PetKarapet,PetKhachTan,PetKhachYepTan}.

In this paper, we prove that if $G\in \mathfrak{N}$ and $H$ has an
interval coloring of a special type, then $G[H]\in \mathfrak{N}$.
Moreover, we show that all regular graphs, complete bipartite graphs
and trees have such a special interval coloring. In particular, this
implies that if $G\in \mathfrak{N}$ and $T$ is a tree, then $G[T]\in
\mathfrak{N}$.\\

\section{Notations, Definitions and Auxiliary Results}\

We use standard notations $C_{n}$ and $K_{n}$ for the simple cycle
and complete graph on $n$ vertices, respectively. We also use
standard notations $K_{m,n}$ and $K_{m,n,l}$ for the complete
bipartite and tripartite graph, respectively, one part of which has
$m$ vertices, the other part has $n$ vertices and the third part has
$l$ vertices.

For two positive integers $a$ and $b$ with $a\leq b$, we denote by
$\left[a,b\right]$ the interval of integers
$\left\{a,\ldots,b\right\}$.

Let $L=\left(l_{1},\ldots,l_{k}\right)$ be an ordered sequence of
nonnegative integers. The smallest and largest elements of $L$ are
denoted by $\underline L$ and $\overline L$, respectively. The
length (the number of elements) of $L$ is denoted by $\vert L\vert$.
By $L(i)$, we denote the $i$th element of $L$ ($1\leq i\leq k$). An
ordered sequence $L=\left(l_{1},\ldots,l_{k}\right)$ is called a
\emph{continuous sequence} if it contains all integers between
$\underline L$ and $\overline L$. If
$L=\left(l_{1},\ldots,l_{k}\right)$ is an ordered sequence and $p$
is nonnegative integer, then the sequence
$\left(l_{1}+p,\ldots,l_{k}+p\right)$ is denoted by $L\oplus p$.
Clearly, $(L\oplus p)(i)=L(i)+p$ for any $p\in \mathbb{Z}_{+}$.

Let $G$ and $H$ be two graphs. The composition (lexicographic
product) $G[H]$ of graphs $G$ and $H$ is defined as follows:
\begin{center}
$V(G[H])=V(G)\times V(H)$,
$E(G[H])=\left\{(u_{1},v_{1})(u_{2},v_{2})\colon\,u_{1}u_{2}\in
E(G)\vee (u_{1}=u_{2}\wedge v_{1}v_{2}\in E(H))\right\}$.
\end{center}

A \emph{partial edge-coloring} of $G$ is a coloring of some of the
edges of $G$ such that no two adjacent edges receive the same color.
If $\alpha $ is a proper edge-coloring of $G$ and $v\in V(G)$, then
$S\left(v,\alpha \right)$ (\emph{spectrum} of a vertex $v$) denotes
the set of colors appearing on edges incident to $v$. The smallest
and largest colors of $S\left(v,\alpha \right)$ are denoted by
$\underline S\left(v,\alpha \right)$ and $\overline S\left(v,\alpha
\right)$, respectively. A proper edge-coloring $\alpha$ of $G$ with
consecutive integers $c_{1},\ldots,c_{t}$ is called an
\emph{interval $t$-coloring} if all colors are used, and for any
$v\in V(G)$, the set $S\left(v,\alpha \right)$ is an interval of
integers. A graph $G$ is \emph{interval colorable} if it has an
interval $t$-coloring for some positive integer $t$. The set of all
interval colorable graphs is denoted by $\mathfrak{N}$. For a graph
$G\in \mathfrak{N}$, the smallest and the largest values of $t$ for
which it has an interval $t$-coloring
are denoted by $w(G)$ and $W(G)$, respectively.\\

In \cite{AsrKam,AsrKamJCTB}, Asratian and Kamalian obtained the
following result.

\begin{theorem}
\label{mytheorem1} If $G\in \mathfrak{N}$, then
$\chi^{\prime}(G)=\Delta(G)$. Moreover, if $G$ is a regular graph,
then $G\in \mathfrak{N}$ if and only if
$\chi^{\prime}(G)=\Delta(G)$.
\end{theorem}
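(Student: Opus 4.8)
The plan is to establish the two assertions separately, in each case by reshaping a given proper coloring rather than building one from scratch.

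For the first assertion, I would start from the trivial bound $\chi^{\prime}(G)\ge\Delta(G)$ and aim to prove the reverse inequality under the hypothesis $G\in\mathfrak{N}$. Fix an interval $t$-coloring $\alpha$ of $G$ with colors $1,\dots,t$; by definition, for every vertex $v$ the spectrum $S(v,\alpha)$ is a set of $d_{G}(v)$ consecutive integers. Define a new coloring $\beta$ by reducing colors modulo $\Delta(G)$: set $\beta(e)=\big((\alpha(e)-1)\bmod\Delta(G)\big)+1$, so that $\beta$ uses only colors in $[1,\Delta(G)]$. The key observation is that any collection of at most $\Delta(G)$ consecutive integers has pairwise distinct residues modulo $\Delta(G)$; applying this to $S(v,\alpha)$, which has size $d_{G}(v)\le\Delta(G)$, shows that the edges at $v$ still receive distinct $\beta$-colors. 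Hence $\beta$ is a proper edge-coloring with at most $\Delta(G)$ colors, so $\chi^{\prime}(G)\le\Delta(G)$ and therefore $\chi^{\prime}(G)=\Delta(G)$.

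For the second assertion, one direction is immediate from the first: if $G$ is regular and $G\in\mathfrak{N}$, then $\chi^{\prime}(G)=\Delta(G)$. For the converse, suppose $G$ is $r$-regular with $\chi^{\prime}(G)=\Delta(G)=r$, and take a proper edge-coloring $\alpha$ of $G$ with the $r$ colors $1,\dots,r$. Since every vertex has degree $r$ and only $r$ colors are available, the $r$ edges at each vertex must use all $r$ colors, so $S(v,\alpha)=[1,r]$ for every $v$ — an interval — and (provided $r\ge 1$) all colors are used; thus $\alpha$ is an interval $r$-coloring and $G\in\mathfrak{N}$.

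There is no serious obstacle here; the only point requiring care is the modular-reduction step, where one must verify that $\beta$ is genuinely proper at each vertex. This is exactly where the interval hypothesis is used: the spectrum $S(v,\alpha)$ being an interval of length at most $\Delta(G)$ is precisely what guarantees distinct residues. The degenerate case $\Delta(G)=0$ (i.e. $G$ edgeless) can be excluded or handled trivially.
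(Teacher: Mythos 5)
Your proof is correct: the paper states this result without proof (quoting it from Asratian and Kamalian \cite{AsrKam,AsrKamJCTB}), and your argument --- reducing the colors of an interval coloring modulo $\Delta(G)$ and using the fact that $d_{G}(v)\leq\Delta(G)$ consecutive integers have distinct residues, together with the observation that a proper $r$-edge-coloring of an $r$-regular graph forces $S(v,\alpha)=[1,r]$ at every vertex --- is exactly the classical proof of that theorem. No gaps; the only care needed is the properness of the reduced coloring, which you address correctly.
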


In \cite{Kampreprint}, Kamalian proved the following result on
complete bipartite graphs.

\begin{theorem}
\label{mytheorem2} For any $m,n\in \mathbb{N}$, the complete
bipartite graph $K_{m,n}$ is interval colorable, and
\begin{description}
\item[(1)] $w\left(K_{m,n}\right)=m+n-\gcd(m,n)$,

\item[(2)] $W\left(K_{m,n}\right)=m+n-1$,

\item[(3)] if $w\left(K_{m,n}\right)\leq t\leq W\left(K_{m,n}\right)$, then $K_{m,n}$
has an interval $t$-coloring.
\end{description}
\end{theorem}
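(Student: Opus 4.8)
The plan is to prove the three parts in the order (2), (1), (3). Write $X=\{x_{0},\dots,x_{m-1}\}$ and $Y=\{y_{0},\dots,y_{n-1}\}$ for the parts of $K_{m,n}$, and set $D=\gcd(m,n)$, so $m=Dm'$, $n=Dn'$ with $\gcd(m',n')=1$.

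For (2): the coloring $\gamma(x_{i}y_{j})=i+j$ uses the $m+n-1$ colors $0,1,\dots,m+n-2$ and satisfies $S(x_{i},\gamma)=[i,i+n-1]$ and $S(y_{j},\gamma)=[j,j+m-1]$, so (after shifting $c\mapsto c+1$) it is an interval $(m+n-1)$-coloring, whence $W(K_{m,n})\ge m+n-1$. Conversely $K_{m,n}$ is triangle-free, so by the Asratian--Kamalian bound recalled in the introduction any interval $t$-coloring has $t\le |V(K_{m,n})|-1=m+n-1$; hence $W(K_{m,n})=m+n-1$. (Alternatively: in an interval $t$-coloring with colors $1,\dots,t$, color $1$ forces some $y\in Y$ with $S(y)=[1,m]$ and color $t$ forces some $x\in X$ with $S(x)=[t-n+1,t]$, and the edge $xy$ needs a color in their intersection, which gives $t\le m+n-1$.)

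For (1), the inequality $w(K_{m,n})\le m+n-D$ will follow from the constructions in (3), so the point is the lower bound. Let $\alpha$ be an interval $t$-coloring with colors $1,\dots,t$. For each color $c$ the edges colored $c$ form a matching in $K_{m,n}$, and its number of edges equals the number of its saturated vertices in $X$, namely $|\{x\in X:c\in S(x,\alpha)\}|$, and likewise equals $|\{y\in Y:c\in S(y,\alpha)\}|$; hence these two counts agree for every $c$. Encode the spectra by
\[
A(z)=\sum_{x\in X}z^{\underline S(x,\alpha)},\qquad B(z)=\sum_{y\in Y}z^{\underline S(y,\alpha)}.
\]
Since each $S(x,\alpha)$ is an interval of length $n$, the coefficient of $z^{c}$ in $A(z)(1+z+\dots+z^{n-1})$ equals $|\{x:c\in S(x,\alpha)\}|$, and similarly $B(z)(1+z+\dots+z^{m-1})$ records the $Y$-side counts; by the previous remark these two polynomials are equal, so
\[
A(z)\,(z^{n}-1)=B(z)\,(z^{m}-1).
\]
Write $z^{n}-1=(z^{D}-1)U(z)$ and $z^{m}-1=(z^{D}-1)V(z)$, where $U=(z^{n}-1)/(z^{D}-1)$, $V=(z^{m}-1)/(z^{D}-1)$, so $\deg U=n-D$, $\deg V=m-D$, $U(0)=V(0)=1$, and $U,V$ are coprime (a common complex root would be simultaneously an $m$-th and an $n$-th root of unity, hence a $D$-th root of unity, which is a root of neither $U$ nor $V$). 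Cancelling $z^{D}-1$ gives $A(z)U(z)=B(z)V(z)$, so $V\mid A$; write $A=VW$ (and then $B=UW$). Finally, colors $1$ and $t$ each occur on an edge with an endpoint in $X$, so $\underline S(x,\alpha)=1$ for some $x$ and $\underline S(x',\alpha)=t-n+1$ for some $x'$: $A$ has a nonzero term of degree $1$, and $\deg A=t-n+1$. Since $V(0)=1$, $W$ also has a nonzero term of degree $1$, so $W$ is nonconstant and $\deg W\ge 1$; comparing leading degrees, $t-n+1=\deg A=\deg V+\deg W\ge (m-D)+1$, i.e. $t\ge m+n-D$. Hence $w(K_{m,n})\ge m+n-\gcd(m,n)$.

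For (3): the value $t=m+n-1$ is realized by $\gamma$, and $t=m+n-D$ by the blow-up coloring --- with $X$-vertices $x_{p,q}$ ($0\le p\le m'-1$, $0\le q\le D-1$) and $Y$-vertices $y_{p',q'}$, set $\alpha(x_{p,q}y_{p',q'})=D(p+p')+\big((q+q')\bmod D\big)$; one checks it is proper, that $S(x_{p,q})=[Dp,Dp+n-1]$, $S(y_{p',q'})=[Dp',Dp'+m-1]$, and that exactly the colors $0,\dots,m+n-D-1$ occur. For an intermediate $t=m+n-1-r$ with $1\le r\le D-1$, first \emph{prescribe} the spectra: choose $W$ with nonnegative integer coefficients, lowest term of degree $1$, $\deg W=D-r$, and $W(1)=D$ (for instance $W(z)=(r+1)z+z^{2}+\dots+z^{D-r}$), set $A=VW$, $B=UW$, and interpret the coefficient of $z^{j}$ in $A$ (resp. $B$) as the number of $X$-vertices to be assigned spectrum $[j,j+n-1]$ (resp. $Y$-vertices with $[j,j+m-1]$). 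Then $A(1)=m$, $B(1)=n$, all $t$ colors get used, and for each color $c$ the number of $X$-vertices whose assigned interval contains $c$ equals the corresponding number for $Y$. It remains to \emph{realize} this assignment by a proper edge-coloring of $K_{m,n}$: process colors $c=1,\dots,t$ in turn, at each stage choosing a perfect matching inside the not-yet-colored edges between the (equinumerous) sets of $X$- and $Y$-vertices whose assigned interval contains $c$, and giving those edges color $c$.

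I expect this last step --- verifying that the required perfect matching always exists in the remaining graph, so that the prescribed profile is genuinely realizable --- to be the main obstacle; the interval structure of the assigned spectra is exactly what should force the relevant Hall-type condition at each stage, but turning that into a clean argument is the technical heart of (3). Everything else is routine, with the generating-function identity $A(z)(z^{n}-1)=B(z)(z^{m}-1)$ being, I believe, the crispest route to the $\gcd$ in part (1).
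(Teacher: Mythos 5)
The paper does not actually prove this theorem; it is quoted from Kamalian's preprint, so there is no in-paper argument to measure you against. Judged on its own, your treatment of parts (1) and (2) is complete and correct: the coloring $i+j$ together with the triangle-free bound $t\le \vert V\vert -1$ settles (2); the identity $A(z)(z^{n}-1)=B(z)(z^{m}-1)$, the coprimality of $U$ and $V$, and the degree comparison giving $t\ge m+n-\gcd(m,n)$ are all valid (the lowest- and highest-degree terms of $VW$ cannot cancel, so your conclusions about $W$ hold even before knowing its coefficients are nonnegative); and the $D$-blow-up coloring for $t=m+n-D$ checks out as a proper interval coloring using exactly the colors $0,\dots,m+n-D-1$, which supplies the matching upper bound in (1). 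The generating-function route to the $\gcd$ lower bound is a genuinely nice, self-contained argument.

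The gap is exactly where you locate it, and it is a real one: for $m+n-D<t<m+n-1$ you construct only a consistent \emph{spectrum profile} ($A=VW$, $B=UW$), not a coloring. Equality of the two color-class counts for every $c$, plus all colors being represented, is necessary but not sufficient for realizability, and the greedy repair (choose a perfect matching among the uncolored edges between $X_{c}$ and $Y_{c}$ for $c=1,2,\dots$) does not obviously maintain Hall's condition: a vertex $x$ with $\underline S(x,\alpha)=1$ has already committed $c-1$ of its edges by stage $c$, and nothing in the argument prevents earlier matchings from having routed all of them into $Y_{c}$. Closing this requires either a carefully maintained invariant, an appeal to a completion theorem (your situation is an instance of Hilton's outline-rectangle framework), or --- most in the spirit of your endpoint constructions --- an explicit interpolation: partition $X=X_{1}\cup X_{2}$ and $Y=Y_{1}\cup Y_{2}$ and color the four complete bipartite blocks with shifted interval colorings chosen so that the spectra at each vertex concatenate into intervals, inducting on $t$. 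Alternatively, (3) would follow from your (1) and (2) via the Asratian--Kamalian continuity theorem (for a connected graph the set of feasible $t$ is an interval of integers), but that is itself a nontrivial result you would have to cite or prove. As written, part (3) is established only at the two endpoints.
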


In \cite{Konig}, K\"onig proved the following result on bipartite
graphs.

\begin{theorem}
\label{mytheorem3} If $G$ is a bipartite graph, then
$\chi^{\prime}(G)=\Delta(G)$.
\end{theorem}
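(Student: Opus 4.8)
The plan is to prove $\chi'(G)=\Delta(G)$ for bipartite $G$ by induction on $\vert E(G)\vert$, the only non-trivial part being the upper bound $\chi'(G)\le \Delta(G)$; the lower bound $\chi'(G)\ge \Delta(G)$ is immediate. Write $\Delta=\Delta(G)$. If $E(G)=\varnothing$ there is nothing to prove, so fix an edge $uv\in E(G)$ and apply the induction hypothesis to $G'=G-uv$: since $\Delta(G')\le \Delta$, the graph $G'$ has a proper edge-coloring $\alpha$ using only colors from $[1,\Delta]$. Because $d_{G}(u),d_{G}(v)\le \Delta$, in $G'$ each of $u$ and $v$ is incident to at most $\Delta-1$ edges, so some color of $[1,\Delta]$ is missing at $u$ and some color of $[1,\Delta]$ is missing at $v$. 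If one color is missing at both, assign it to $uv$ and we are finished; otherwise fix a color $a$ missing at $u$ and a color $b$ missing at $v$ with $a\ne b$, and note that then $a$ is present at $v$ and $b$ is present at $u$ (else $a$ or $b$ would be missing at both).

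Next I would consider the subgraph $H$ of $G'$ consisting of the edges colored $a$ or $b$. Since $\alpha$ is proper, each vertex meets at most one $a$-edge and at most one $b$-edge, so $H$ has maximum degree at most $2$ and its components are paths and (even) cycles; in particular $u$, which meets a $b$-edge but no $a$-edge, is an endpoint of a path $P$ in $H$. The key claim is that $P$ does not terminate at $v$. Indeed, $v$ meets an $a$-edge but no $b$-edge, so if $v$ were the other endpoint of $P$, then $P$ would be a $u$--$v$ path whose edges alternate, starting with color $b$ at $u$ and ending with color $a$ at $v$; such an alternating path has an even number of edges, so $P$ together with the edge $uv$ would be a cycle of odd length in $G$, contradicting the hypothesis that $G$ is bipartite. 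This parity step is precisely where bipartiteness is used, and it is the part of the argument I expect to require the most care.

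Finally, swap the colors $a$ and $b$ along the path $P$. This keeps the edge-coloring of $G'$ proper, it changes nothing at $v$ (which lies in a component of $H$ distinct from that of $u$), and it converts the unique $b$-edge at $u$ into an $a$-edge; hence after the swap both $u$ and $v$ are missing the color $b$. Assigning $b$ to $uv$ then extends the coloring to a proper edge-coloring of $G$ using only colors from $[1,\Delta]$, which completes the induction and yields $\chi'(G)\le \Delta(G)$, hence $\chi'(G)=\Delta(G)$. (An alternative route would be to embed $G$ into a $\Delta$-regular bipartite graph and repeatedly remove perfect matchings obtained from Hall's theorem, but the Kempe-chain induction above appears to be the most economical; in either approach the essential use of bipartiteness is the absence of odd cycles.)
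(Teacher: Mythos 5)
Your proof is correct and complete: it is the standard Kempe-chain induction for K\H{o}nig's edge-coloring theorem, with the bipartiteness used exactly where it must be (the $u$--$v$ alternating path starting with $b$ and ending with $a$ would close an odd cycle with the edge $uv$), and the observation that $v$ has degree at most $1$ in the $\{a,b\}$-subgraph correctly rules out $v$ being an interior vertex of the swapped path. The paper itself offers no proof of this statement --- it is quoted as Theorem~\ref{mytheorem3} with a citation to K\H{o}nig's 1916 paper --- so there is nothing to compare against; your argument is the classical one and would serve as a self-contained proof of the cited result.
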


Let $\alpha$ be a proper edge-coloring of $G$ and
$V^{\prime}=\{v_{1},\ldots,v_{k}\}\subseteq V(G)$. Consider the sets
$S\left(v_{1},\alpha \right),\ldots,S\left(v_{k},\alpha \right)$.
For a coloring $\alpha$ of $G$ and $V^{\prime}\subseteq V(G)$,
define two ordered sequences $LSE(V^{\prime},\alpha)$ (\emph{Lower
Spectral Edge}) and $USE(V^{\prime},\alpha)$ (\emph{Upper Spectral
Edge}) as follows:
\begin{center}
$LSE(V^{\prime},\alpha)=\left(\underline S\left(v_{i_{1}},\alpha
\right),\underline S\left(v_{i_{2}},\alpha \right),\dots,\underline
S\left(v_{i_{k}},\alpha \right)\right)$,
\end{center}
where $\underline S\left(v_{i_{l}},\alpha \right)\leq \underline
S\left(v_{i_{l+1}},\alpha \right)$ for $1\leq l\leq k-1$, and
\begin{center}
$USE(V^{\prime},\alpha)=\left(\overline S\left(v_{j_{1}},\alpha
\right),\overline S\left(v_{j_{2}},\alpha \right),\dots,\overline
S\left(v_{j_{k}},\alpha \right)\right)$,
\end{center}
where $\overline S\left(v_{j_{l}},\alpha \right)\leq \overline
S\left(v_{j_{l+1}},\alpha \right)$ for $1\leq l\leq k-1$.

\begin{figure}[h]
\begin{center}
\includegraphics[width=13pc,height=10pc]{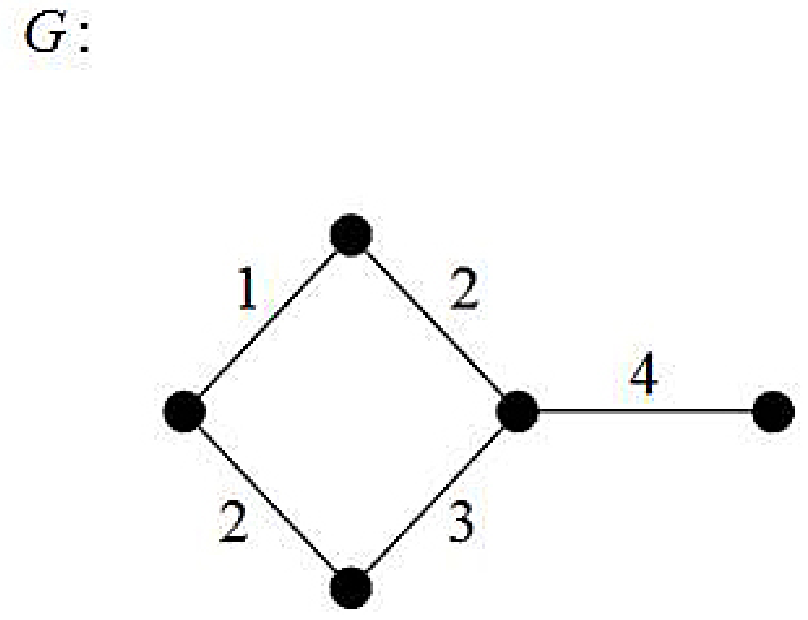}\\
\caption{The graph $G$ with its coloring $\alpha$ and with
$LSE(V(G),\alpha)=(1,1,2,2,4)$,
$USE(V(G),\alpha)=(2,2,3,4,4)$.}\label{Example1}
\end{center}
\end{figure}

For example, if we consider the graph $G$ with its coloring $\alpha$
shown in Fig. \ref{Example1}, then $LSE(V(G),\alpha)=(1,1,2,2,4)$
and $USE(V(G),\alpha)=(2,2,3,4,4)$. Moreover, the sequence
$(1,1,2,2,4)$ is not continuous, but the sequence $(2,2,3,4,4)$ is
continuous.

Recall that for ordered sequences $LSE(V^{\prime},\alpha)$ and
$USE(V^{\prime},\alpha)$, the number of elements in
$LSE(V^{\prime},\alpha)$ and $USE(V^{\prime},\alpha)$ is denoted by
$\vert LSE(V^{\prime},\alpha)\vert$ and $\vert
USE(V^{\prime},\alpha)\vert$, respectively. Clearly, $\vert
LSE(V(G),\alpha)\vert=\vert USE(V(G),\alpha)\vert=\vert
V(G)\vert$.\\

We also need the following lemma.

\begin{figure}[h]
\begin{center}
\includegraphics[width=20pc,height=20pc]{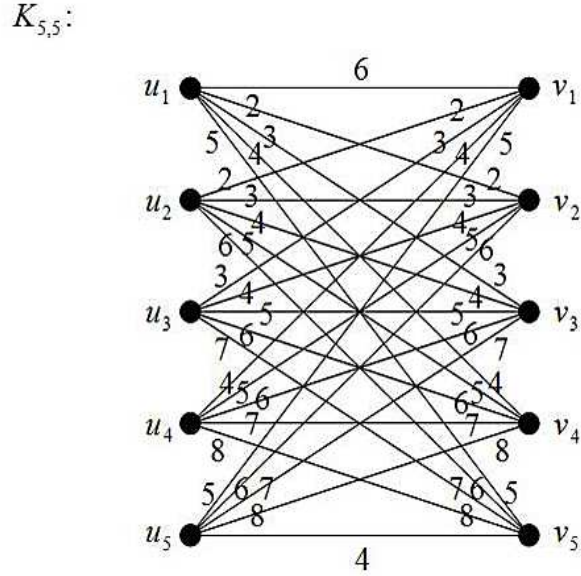}\\
\caption{The interval coloring $\gamma$ of $K_{5,5}$ with
$LSE(U,\gamma)=LSE(V,\gamma)=(2,2,3,4,4)$}\label{Example2}
\end{center}
\end{figure}

\begin{lemma}
\label{mylemma} If $K_{n,n}$ is a complete bipartite graph with
bipartition $(U,V)$, then for any continuous sequence $L$ with
length $n$, $K_{n,n}$ has an interval coloring $\alpha$ such that
\begin{center}
$LSE(U,\alpha)=LSE(V,\alpha)=L$.
\end{center}
\end{lemma}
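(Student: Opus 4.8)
The plan is to build the coloring $\alpha$ of $K_{n,n}$ explicitly from a known interval coloring and then shift the colors around individual vertices so that the lower spectral endpoints match the prescribed sequence $L$. Write $L=(l_1,\dots,l_n)$ with $l_1\le\dots\le l_n$, and let $p=\underline L=l_1$ and $q=\overline L=l_n$; since $L$ is continuous, $q-p\le n-1$, so $q-p+1\le n$. By Theorem~\ref{mytheorem2}, $K_{n,n}$ has an interval $t$-coloring for every $t$ with $n=w(K_{n,n})\le t\le W(K_{n,n})=2n-1$. I would start from a carefully chosen such coloring $\beta$ — most naturally the canonical one obtained from the Latin-square coloring of $K_{n,n}$ (color edge $u_iv_j$ with $i+j$, giving colors $2,\dots,2n$, or after normalizing, $1,\dots,2n-1$), in which $\underline S(u_i,\beta)$ and $\underline S(v_j,\beta)$ run over a controlled range as $i,j$ vary.

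The key observation is a local recoloring move: if a vertex $x$ currently has spectrum $S(x,\beta)=[a,b]$ and we want its lower endpoint to become $a'<a$, we may, for a bipartite graph, "rotate" colors along alternating paths, but a cleaner route for $K_{n,n}$ is to use the following flexibility. Enumerate the vertices $u_1,\dots,u_n$ of $U$ so that we want $\underline S(u_i,\alpha)=l_i$, and likewise enumerate $v_1,\dots,v_n$ of $V$ with $\underline S(v_j,\alpha)=l_j$ (the same sequence $L$ on both sides). I would color the edge $u_iv_j$ by a value built from $l_i$, $l_j$, and $i+j$ in such a way that: (i) at vertex $u_i$ the set of colors $\{\alpha(u_iv_j):j\}$ is exactly the interval $[l_i,\,l_i+n-1]$, and symmetrically at each $v_j$; (ii) all these intervals together cover $[p,\,q+n-1]$ with no gaps, so that the coloring is an interval coloring of the whole graph. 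The natural candidate is $\alpha(u_iv_j) = \max(l_i,l_j) + \big((i+j)\bmod n\big)$ or a similar formula, but the cleanest version, which I would verify in detail, is
\[
\alpha(u_iv_j)=
\begin{cases}
l_i + (j-i \bmod n), & \text{chosen to fill } [l_i,l_i+n-1]\text{ at }u_i,\\[2pt]
l_j + (i-j \bmod n), & \text{chosen to fill } [l_j,l_j+n-1]\text{ at }v_j,
\end{cases}
\]
reconciled into a single well-defined rule. Because $L$ is continuous, the offsets $l_i-l_j$ are small enough that the two requirements can be met simultaneously by a consistent choice.

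Concretely, I expect the cleanest proof to go by induction on $n$, or better, by a direct construction: take the standard interval $2n{-}1$-coloring $\gamma_0$ of $K_{n,n}$ in which $\underline S(u_i,\gamma_0)=\underline S(v_i,\gamma_0)=i$ for all $i$ (this is exactly the $LSE=USE=(1,2,\dots,n)$ type coloring, the analogue of Fig.~\ref{Example2}), and then apply a color permutation/merging argument: repeatedly identify two color classes $c$ and $c+1$ whenever no vertex needs its spectrum to distinguish them, which lowers the top color and raises the effective lower endpoints in a controlled way, transforming the sorted sequence $(1,2,\dots,n)$ into any prescribed continuous $L$. Each such merge preserves properness (since $K_{n,n}$ is bipartite, merging two matchings into a new matching is checked by König's theorem, Theorem~\ref{mytheorem3}) and preserves the interval property at every vertex, while moving $LSE$ one step closer to $L$.

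The main obstacle, and the step I would spend the most care on, is showing that the merging/recoloring operations can be scheduled so that the lower endpoints of $U$ and of $V$ stay synchronized to the \emph{same} sequence $L$ at every stage — a merge that helps a vertex of $U$ reach its target $l_i$ must not push some vertex of $V$ past its target $l_j$. This is where continuity of $L$ is essential: the spread $\overline L-\underline L\le n-1$ guarantees enough "room" (total colors at least $n$, at most $2n-1$) that a symmetric bookkeeping is possible. I would handle this by a potential-function argument — define $\Phi(\alpha)=\sum_i |\underline S(u_i,\alpha)-l_i| + \sum_j|\underline S(v_j,\alpha)-l_j|$ and exhibit, as long as $\Phi>0$, a single recoloring move decreasing $\Phi$ — so that the process terminates with the desired coloring $\alpha$. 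Once $LSE(U,\alpha)=LSE(V,\alpha)=L$ is achieved and the coloring is verified to still use a consecutive set of colors with every vertex-spectrum an interval, the lemma follows.
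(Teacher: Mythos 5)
Your write-up is a collection of candidate strategies rather than a proof, and the one strategy you make concrete enough to test does not work. The displayed two-case formula for $\alpha(u_iv_j)$ is not well defined (the two branches generally disagree on the same edge), and you defer exactly the hard part by saying it should be ``reconciled into a single well-defined rule.'' The fallback plan --- start from the canonical coloring with $LSE=(1,2,\dots,n)$ and repeatedly \emph{merge} color classes $c$ and $c+1$ --- is impossible in $K_{n,n}$: every vertex has degree $n\ge 2$ and its spectrum is an interval, so any two consecutive colors co-occur at many vertices, and the union of the two classes is never a matching. The appeal to K\"onig's theorem here is a misuse: K\"onig decomposes a regular bipartite graph \emph{into} matchings; it never lets you fuse two matchings sharing vertices into one. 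Finally, the potential-function argument is only announced --- you never exhibit a single recoloring move that decreases $\Phi$ while preserving properness and the interval property at every vertex, and that is precisely the content a proof would need.

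For comparison, the paper's proof is a direct construction with no recoloring dynamics. Writing $L$ with value $l_i$ repeated $n_i$ times ($\sum n_i=n$, $l_{i+1}=l_i+1$), it places, for each $i\le k-1$, the wrap-around diagonal $\{u_pv_q: p+q\equiv 1+\sum_{j\le i}n_j \pmod n\}$ as a perfect matching split between color $l_i$ (on $p+q=1+\sum_{j\le i}n_j$) and color $l_i+n$ (on $p+q=n+1+\sum_{j\le i}n_j$). These $k-1$ matchings form a $(k-1)$-regular spanning subgraph; the complementary $(n-k+1)$-regular bipartite subgraph is properly colored with the consecutive colors $l_k,\dots,l_k+n-k$ by K\"onig's theorem (used correctly, to decompose a regular bipartite graph). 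One then checks directly that $S(u_i,\gamma)=S(v_i,\gamma)=[L(i),L(i)+n-1]$ for every $i$. If you want to salvage your approach, you should abandon merging and instead pin down the lower endpoints by explicitly prescribing which perfect matchings carry the colors below $l_k$, which is exactly what this construction does.
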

\begin{proof} Let $K_{n,n}$ be a complete bipartite graph with
bipartition $(U,V)$, where $U=\{u_{1},\ldots,u_{n}\}$ and
$V=\{v_{1},\ldots,v_{n}\}$. Also, let
$L=\left(\underbrace{l_{1},\ldots,l_{1}}_{n_{1}},\underbrace{l_{2},\ldots,l_{2}}_{n_{2}},\ldots,\underbrace{l_{k},\ldots,l_{k}}_{n_{k}}\right)$
be a continuous sequence with length $n$
$\left(\sum_{i=1}^{k}n_{i}=n\right)$. Clearly, $l_{i+1}=l_{i}+1$ for
$1\leq l\leq k-1$.

First we define a partial edge-coloring $\alpha$ of $K_{n,n}$ as
follows:

\begin{description}
\item[1)] for $1\leq i\leq k-1$ and $p+q=1+\sum_{j=1}^{i}n_{j}$, let
$\alpha \left(u_{p}v_{q}\right)=l_{i}$;

\item[2)] for $1\leq i\leq k-1$ and $p+q=n+1+\sum_{j=1}^{i}n_{j}$, let
$\alpha \left(u_{p}v_{q}\right)=l_{i}+n$.
\end{description}

Define a subgraph $G$ of $K_{n,n}$ as follows:
\begin{center}
$V(G)=V(K_{n,n})$ and $E(G)=\left\{e\colon\,e\in E(K_{n,n})\wedge
\alpha(e)\in [l_{1},l_{k-1}]\cup [l_{1}+n,l_{k-1}+n]\right\}$.
\end{center}

By the definition of $\alpha$, $G$ is a spanning $(k-1)$-regular
bipartite subgraph of $K_{n,n}$. Next we define a subgraph
$G^{\prime}$ of $K_{n,n}$ as follows:
\begin{center}
$V(G)=V(K_{n,n})$ and
$E\left(G^{\prime}\right)=E\left(K_{n,n}\right)\setminus E(G)$.
\end{center}

Clearly, $G^{\prime}$ is a spanning $(n-k+1)$-regular bipartite
subgraph of $K_{n,n}$. By Theorem \ref{mytheorem3},
$\chi^{\prime}\left(G^{\prime}\right)=\Delta
\left(G^{\prime}\right)=n-k+1$. Let $\beta$ be a proper
edge-coloring of $G^{\prime}$ with colors
$l_{k},l_{k}+1,\ldots,l_{k}+n-k$. By the definition of $\beta$, for
each vertex $v\in V(K_{n,n})$, $S(v,\beta)=[l_{k},l_{k}+n-k]$.

Now we are able to define an edge-coloring $\gamma$ of $K_{n,n}$.

For every $e\in E(K_{n,n})$, let

\begin{center}
$\gamma(e)=\left\{
\begin{tabular}{ll}
$\alpha(e)$, & if $e\in E(G)$,\\
$\beta(e)$, & if $e\in E\left(G^{\prime}\right)$.\\
\end{tabular}%
\right.$
\end{center}

Let us prove that $\gamma$ is an interval $(l_{k}+n-1)$-coloring of
$K_{n,n}$ such that $S(u_{i},\gamma)=S(v_{i},\gamma)$ and
$\underline S(u_{i},\gamma)=\underline S(v_{i},\gamma)=l_{i}$ for
$1\leq i\leq n$.

By the definition of $\gamma$, for $1\leq i\leq n$, we have
\begin{center}
$S(u_{i},\gamma)=S(v_{i},\gamma)=[l_{1},l_{1}+n-1]$ if $i\in
\left[1,n_{1}\right]$,

$S(u_{i},\gamma)=S(v_{i},\gamma)=[l_{2},l_{2}+n-1]$ if $i\in
\left[n_{1}+1,n_{1}+n_{2}\right]$,

$\ldots$

$S(u_{i},\gamma)=S(v_{i},\gamma)=[l_{k},l_{k}+n-1]$ if $i\in
\left[\sum_{j=1}^{k-1}n_{j}+1,\sum_{j=1}^{k}n_{j}\right]$.
\end{center}

This implies that $\gamma$ is an interval $(l_{k}+n-1)$-coloring of
$K_{n,n}$ and $LSE(U,\gamma)=LSE(V,\gamma)=L$. ~$\square$
\end{proof}

Fig. \ref{Example2} shows the interval coloring $\gamma$ of
$K_{5,5}$ described in the proof of Lemma \ref{mylemma}.\\

\bigskip

\section{The Main Result}\

Here, we prove our main result which states that if $G\in
\mathfrak{N}$ and $H$ has an interval coloring of a special type,
then $G[H]\in \mathfrak{N}$.

\begin{theorem}
\label{mytheorem4} If $G\in \mathfrak{N}$ and $H$ has an interval
coloring $\alpha_{H}$ such that $USE(V(H),\alpha_{H})$ is
continuous, then $G[H]\in \mathfrak{N}$. Moreover, if $\vert
V(H)\vert = n$ and $L=USE(V(H),\alpha_{H})$, then
\begin{center}
$w\left(G[H]\right)\leq w(G)\cdot n + \overline L$ and
$W\left(G[H]\right)\geq W(G)\cdot n + \overline L$
\end{center}
\end{theorem}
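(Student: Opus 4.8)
The plan is to construct an explicit interval coloring of $G[H]$ by combining an interval coloring $\beta$ of $G$ with the special coloring $\alpha_H$ of $H$ and the colorings of complete bipartite graphs provided by Lemma~\ref{mylemma}. Fix an interval $t$-coloring $\beta$ of $G$ with $w(G)\le t\le W(G)$ (we will run the argument for general $t$ in this range, which yields both inequalities for $w$ and $W$ simultaneously by taking $t=w(G)$ and $t=W(G)$). Recall that $G[H]$ decomposes edge-disjointly into: (i) for each vertex $u\in V(G)$, a copy $H_u$ of $H$ on the fiber $\{u\}\times V(H)$; and (ii) for each edge $uu'\in E(G)$, a complete bipartite graph $K_{n,n}$ between the fibers of $u$ and $u'$. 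The idea is to color each fiber-copy $H_u$ by a shifted version $\alpha_H\oplus p_u$ of $\alpha_H$, where the shift $p_u$ depends only on $\beta$, and to color each bipartite part $K_{n,n}$ using Lemma~\ref{mylemma} so that its spectrum at each endpoint fiber meshes exactly with the gap left by the fiber colorings.

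The key device is this: at a vertex $(u,v)$ of $G[H]$, the edges split into the $H_u$-edges at $v$ (occupying, after the shift $p_u$, the interval $S(v,\alpha_H)\oplus p_u$) and, for each $G$-edge $uu'$ incident to $u$, a block of $n$ edges of the corresponding $K_{n,n}$. In $\beta$, the colors at $u$ form an interval of $d_G(u)$ consecutive integers; scale this up by a factor of $n$ and then add an offset so that color class $c$ at $u$ in $\beta$ corresponds to a block of $n$ consecutive colors, namely $[\,(c-1)n+1,\,cn\,]$ shifted appropriately. Concretely, set $p_u := n\cdot(\underline S(u,\beta)-1)$ for the fiber shift, so that the fiber $H_u$ uses colors in $[\,p_u+1,\ p_u+\overline L\,]$ (since $\alpha_H$ uses colors $1,\dots,\overline L$). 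For a $G$-edge $uu'$ with $\beta(uu')=c$, the associated $K_{n,n}$ is colored, via Lemma~\ref{mylemma}, by an interval coloring whose lower spectral edge on \emph{both} sides equals the continuous sequence $L\oplus\big((c-1)n\big)$ — here is exactly where the hypothesis that $USE(V(H),\alpha_H)=L$ is continuous is used, since Lemma~\ref{mylemma} requires a continuous target sequence. One checks that the spectrum contributed by this $K_{n,n}$ at a vertex $(u,v)$ of the $u$-side is precisely the interval starting at $(c-1)n + \underline S(v,\alpha_H)$ of length $n$, i.e. it continues seamlessly from where the edges colored by smaller $\beta$-colors left off, and together with the fiber block $S(v,\alpha_H)\oplus p_u$ these blocks tile a single interval of integers at $(u,v)$.

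I would then verify three things in order: (1) \textbf{properness} — fiber edges and bipartite edges at a vertex get disjoint colors because the $\beta$-colors at $u$ are distinct and the scaling-by-$n$-with-offset keeps the corresponding blocks disjoint; within a single $K_{n,n}$ properness is inherited from Lemma~\ref{mylemma}; (2) \textbf{interval property at each vertex} — at $(u,v)$ the union of the fiber block and the $d_G(u)$ bipartite blocks is an interval, which follows because $S(u,\beta)$ is an interval and each consecutive $\beta$-color contributes the next block of $n$ colors aligned so the lower end of block $c+1$ is one more than the upper end of block $c$; crucially the alignment works on \emph{both} endpoints of every $G$-edge because Lemma~\ref{mylemma} gives the same $LSE$ on $U$ and on $V$; (3) \textbf{all colors used / range} — the largest color used at any vertex is $\max_u\big(p_u + \text{(number of blocks)}\cdot n + (\overline L - n)\big)$-type expression; carefully tracking the maximum over the graph gives total number of colors exactly $t\cdot n + \overline L$, so $G[H]$ admits an interval $(t\cdot n+\overline L)$-coloring for every $t\in[w(G),W(G)]$, yielding $w(G[H])\le w(G)\cdot n+\overline L$ and $W(G[H])\ge W(G)\cdot n+\overline L$.

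The main obstacle is the bookkeeping in step (2): making the offsets in the fiber shifts $p_u$ and the $K_{n,n}$-shifts $(c-1)n$ consistent so that, simultaneously at \emph{both} ends of every edge $uu'\in E(G)$, the $d_G(u)+1$ blocks (one fiber block plus $d_G(u)$ bipartite blocks, using that $S(v,\alpha_H)$ has the same lower endpoint pattern recorded by $L$) glue into one contiguous interval with no gaps and no overlaps. This is where the continuity of $USE(V(H),\alpha_H)$ is essential and where Lemma~\ref{mylemma}'s guarantee of equal $LSE$ on the two sides of $K_{n,n}$ does the real work — without it the two fibers joined by a $G$-edge could demand incompatible block placements. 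Once the offsets are pinned down the verification of properness and of the color count is routine arithmetic.
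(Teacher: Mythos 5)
Your construction is essentially the paper's own proof: the same decomposition of $G[H]$ into fiber copies of $H$ and one $K_{n,n}\cong K_{2}[H]$ per edge of $G$, the same fiber shift $n\bigl(\underline S(u,\beta)-1\bigr)$, and the same use of Lemma~\ref{mylemma} with a continuous target sequence built from $L=USE(V(H),\alpha_{H})$; the verification you sketch (properness, interval property at each vertex, color count $t\cdot n+\overline L$ for every $t\in[w(G),W(G)]$) is exactly what the paper carries out.

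One slip to fix: you ask Lemma~\ref{mylemma} for lower spectral edge $L\oplus\bigl((c-1)n\bigr)$, but the correct target is $(L\oplus 1)\oplus\bigl((c-1)n\bigr)$. The fiber block at $(u,w_{j})$ ends at color $L(j)+n(c_{0}-1)$ with $c_{0}=\underline S(u,\beta)$, so the bipartite block for the $\beta$-color $c_{0}$ must \emph{begin} at $L(j)+n(c_{0}-1)+1$; with your offset it begins at $L(j)+n(c_{0}-1)$ and collides with the top fiber color, breaking properness. (Relatedly, the phrase ``interval starting at $(c-1)n+\underline S(v,\alpha_{H})$'' is inconsistent with your own choice of target sequence, which is built from the \emph{upper} spectral edge $\overline S(v,\alpha_{H})=L(j)$; it should read $(c-1)n+L(j)+1$.) This is precisely why the paper applies Lemma~\ref{mylemma} to $L\oplus 1$ rather than to $L$. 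With that one-unit correction the blocks tile a single interval at every vertex and the rest of your argument goes through verbatim.
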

\begin{proof}
Let $V(G) = \left\{u_{1},\ldots,u_{m}\right\}$, $V(H) =
\left\{w_{1},\ldots,w_{n}\right\}$ and

\begin{center}
$V(G[H]) = \left\{v_{j}^{(i)}\colon\,1\leq i\leq m, 1\leq j \leq
n\right\}$ and $E(G[H])
=\left\{v_{p}^{(i)}v_{q}^{(j)}\colon\,u_{i}u_{j}\in E(G), 1\leq
p\leq n, 1\leq q\leq n\right\}\cup \bigcup_{i=1}^{m}{E^{i}}$,
\end{center}
where $E^{i} = \left\{v_{p}^{(i)}v_{q}^{(i)}\colon\,w_{p}w_{q}\in
E(H)\right\}$.

Let $\alpha_{G}$ be an interval $t$-coloring of $G$ and $L$ be a
continuous sequence with length $n$ such that
$L=USE(V(H),\alpha_{H})$. Without loss of generality we may assume
that vertices of $H$ are numbered so that $\overline
S\left(w_{i},\alpha_{H}\right)=L(i)$ for $1\leq i\leq n$. Let us
consider the graph $K_{2}[H]$. Clearly, $K_{2}[H]$ is isomorphic to
$K_{n,n}$. Let
$V\left(K_{2}[H]\right)=\left\{x_{1},\ldots,x_{n},y_{1},\ldots,y_{n}\right\}$
and $E\left(K_{2}[H]\right)=\left\{x_{i}y_{j}\colon\, 1\leq i\leq n,
1\leq j\leq n\right\}$. Since $L$ is a continuous sequence, $L
\oplus 1$ is a continuous sequence, too. By Lemma \ref{mylemma},
$K_{2}[H]$ has an interval coloring $\beta$ such that $\underline
S\left(x_{i},\beta\right)=\underline
S\left(y_{i},\beta\right)=L(i)+1$ for $1\leq i\leq n$.\\

Now we are able to define an edge-coloring $\alpha_{G[H]}$ of
$G[H]$.

\begin{description}
\item[1)] For $1\leq i\leq m$ and $v_{p}^{(i)}v_{q}^{(i)}\in E^{i}$ ($p,q=1,\ldots n$), let
\begin{center}
$\alpha_{G[H]}\left(v_{p}^{(i)}v_{q}^{(i)}\right)=\left(\underline
S\left(u_{i},\alpha_{G}\right)-1\right)n +
\alpha_{H}\left(w_{p}w_{q}\right)$.
\end{center}

\item[2)] For $1\leq i<j\leq m$ and $v_{p}^{(i)}v_{q}^{(j)}\in E(G[H])$ ($p,q=1,\ldots n$), let
\begin{center}
$\alpha_{G[H]}\left(v_{p}^{(i)}v_{q}^{(j)}\right)=\left(
\alpha_{G}\left(u_{i}u_{j}\right)-1\right)n +
\beta\left(x_{p}y_{q}\right)$.
\end{center}
\end{description}

It is not difficult to see that $\alpha_{G[H]}$ is a proper
edge-coloring of $G[H]$. Let us prove that $\alpha_{G[H]}$ is an
interval $(t\cdot n + \overline L)$-coloring of $G[H]$. For the
proof, it suffices to show that for $1\leq i\leq m$ and $1\leq j\leq
n$,
\begin{center}
$\overline S\left(v_{j}^{(i)},\alpha_{G[H]}\right)-\underline
S\left(v_{j}^{(i)},\alpha_{G[H]}\right)=d_{G[H]}\left(v_{j}^{(i)}\right)-1$.
\end{center}

By the definition of $\alpha_{G[H]}$, for $1\leq i\leq m$ and $1\leq
j\leq n$, we have
\begin{center}
$\overline S\left(v_{j}^{(i)},\alpha_{G[H]}\right)=\left(\overline
S\left(u_{i},\alpha_{G}\right)-1\right)n + L(j)+1+n-1=\overline
S\left(u_{i},\alpha_{G}\right)\cdot n+L(j)$.
\end{center}

By the definition of $\alpha_{G[H]}$ and taking into account that
$L(j)-\underline
S\left(w_{j},\alpha_{H}\right)=d_{H}\left(w_{j}\right)-1$ ($1\leq
j\leq n$), for $1\leq i\leq m$ and $1\leq j\leq n$, we have
\begin{center}
$\underline S\left(v_{j}^{(i)},\alpha_{G[H]}\right)=\left(\underline
S\left(u_{i},\alpha_{G}\right)-1\right)n +
L(j)-d_{H}\left(w_{j}\right)+1$.
\end{center}

Now, taking into account that $\overline
S\left(u_{i},\alpha_{G}\right)-\underline
S\left(u_{i},\alpha_{G}\right)=d_{G}(u_{i})-1$ ($1\leq i\leq m$),
for $1\leq i\leq m$ and $1\leq j\leq n$, we obtain
\begin{center}
$\overline S\left(v_{j}^{(i)},\alpha_{G[H]}\right)-\underline
S\left(v_{j}^{(i)},\alpha_{G[H]}\right)=\left(\overline
S\left(u_{i},\alpha_{G}\right)-\underline
S\left(u_{i},\alpha_{G}\right)+1\right)n+d_{H}\left(w_{j}\right)-1=d_{G}\left(u_{i}\right)\cdot
n+d_{H}\left(w_{j}\right)-1=d_{G[H]}\left(v_{j}^{(i)}\right)-1$.
\end{center}

This shows that $\alpha_{G[H]}$ is an interval $(t\cdot n +
\overline L)$-coloring of $G[H]$. Thus, $w\left(G[H]\right)\leq
w(G)\cdot n + \overline L$ and $W\left(G[H]\right)\geq W(G)\cdot n +
\overline L$. ~$\square$
\end{proof}

\begin{corollary}
\label{mycorollary1} If $G,H\in \mathfrak{N}$ and $H$ is an
$r$-regular graph, then $G[H]\in \mathfrak{N}$. Moreover, if $\vert
V(H)\vert = n$, then
\begin{center}
$w(G[H])\leq w(G)\cdot n + r$ and $W(G[H])\geq W(G)\cdot n + r$.
\end{center}
\end{corollary}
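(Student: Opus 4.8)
The plan is to derive Corollary~\ref{mycorollary1} directly from Theorem~\ref{mytheorem4} by checking that an $r$-regular graph $H\in\mathfrak{N}$ always admits an interval coloring of the required ``special type,'' namely one whose upper spectral edge sequence is continuous. First I would recall that since $H$ is $r$-regular and $H\in\mathfrak{N}$, Theorem~\ref{mytheorem1} guarantees that $H$ has an interval $r$-coloring $\alpha_{H}$ (indeed $\chi'(H)=\Delta(H)=r$, and an interval coloring of a regular graph with $\Delta(H)$ colors is just a proper $r$-edge-coloring in which every color class is a perfect matching).

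The key observation is then that in such a coloring every vertex $w$ of $H$ has $d_H(w)=r$, so $S(w,\alpha_H)$ is an interval of length $r$ contained in $\{1,\dots,r\}$; hence $S(w,\alpha_H)=[1,r]$ for every vertex, and in particular $\overline S(w,\alpha_H)=r$ for all $w\in V(H)$. Therefore $L:=USE(V(H),\alpha_H)=(r,r,\dots,r)$, a constant sequence of length $n=|V(H)|$. A constant sequence is trivially continuous (it contains all integers between its smallest and largest element, which coincide), so $\alpha_H$ satisfies the hypothesis of Theorem~\ref{mytheorem4}, and $\overline L=r$.

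Applying Theorem~\ref{mytheorem4} with this $\alpha_H$ immediately yields $G[H]\in\mathfrak{N}$ together with the bounds $w(G[H])\le w(G)\cdot n+\overline L=w(G)\cdot n+r$ and $W(G[H])\ge W(G)\cdot n+\overline L=W(G)\cdot n+r$, which is exactly the assertion of the corollary. There is essentially no obstacle here: the only point needing a moment's care is the observation that, for a regular graph in $\mathfrak{N}$, an interval coloring with the minimum number of colors $\Delta(H)=r$ exists and forces $S(w,\alpha_H)=[1,r]$ uniformly — everything else is a direct substitution into the conclusion of the main theorem.
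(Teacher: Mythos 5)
Your proposal is correct and follows essentially the same route as the paper: both use Theorem~\ref{mytheorem1} to get $\chi'(H)=\Delta(H)=r$, observe that the resulting proper $r$-edge-coloring gives $S(v,\alpha_H)=[1,r]$ for every vertex so that $USE(V(H),\alpha_H)=(r,\ldots,r)$ is continuous, and then apply Theorem~\ref{mytheorem4} with $\overline L=r$. No issues.
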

\begin{proof} Since $H\in \mathfrak{N}$ and $H$ is an
$r$-regular graph, by Theorem \ref{mytheorem1},
$\chi^{\prime}(H)=\Delta(H)=r$. This implies that $H$ has a proper
edge-coloring $\alpha_{H}$ with colors $1,\ldots,r$. Hence, for
every $v\in V(H)$, $S\left(v,\alpha_{H}\right)=[1,r]$. Clearly,
$\alpha_{H}$ is an interval $r$-coloring and
$USE(V(H),\alpha_{H})=(r,\ldots,r)$ is continuous, so, by Theorem
\ref{mytheorem4}, $G[H]\in \mathfrak{N}$. Moreover, if $\vert
V(H)\vert = n$, then $w(G[H])\leq w(G)\cdot n + r$ and $W(G[H])\geq
W(G)\cdot n + r$. ~$\square$
\end{proof}

\begin{corollary}
\label{mycorollary2} If $G\in \mathfrak{N}$, then $G[\overline
K_{n}]\in \mathfrak{N}$ for any $n\in \mathbb{N}$. Moreover,
$w(G[\overline K_{n}])\leq w(G)\cdot n$ and $W(G[\overline
K_{n}])\geq W(G)\cdot n$.
\end{corollary}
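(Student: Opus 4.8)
The plan is to give a direct construction in the spirit of the proof of Theorem \ref{mytheorem4}, specialized to the case $H=\overline K_{n}$, which has no edges and is therefore $0$-regular. Write $V(G)=\{u_{1},\ldots,u_{m}\}$ and let the vertices of $G[\overline K_{n}]$ be $v_{j}^{(i)}$ for $1\leq i\leq m$, $1\leq j\leq n$, with $v_{p}^{(i)}v_{q}^{(j)}\in E(G[\overline K_{n}])$ exactly when $u_{i}u_{j}\in E(G)$. Fix an interval $t$-coloring $\alpha_{G}$ of $G$. The key observation is that for each edge $u_{i}u_{j}\in E(G)$, the subgraph of $G[\overline K_{n}]$ induced on $\{v_{1}^{(i)},\ldots,v_{n}^{(i)}\}\cup\{v_{1}^{(j)},\ldots,v_{n}^{(j)}\}$ is a copy of $K_{n,n}$, hence an $n$-regular bipartite graph, so by Theorem \ref{mytheorem3} it has a proper $n$-edge-coloring; equivalently, it decomposes into $n$ perfect matchings $M_{1},\ldots,M_{n}$. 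I would assign to these matchings the $n$ consecutive colors $(\alpha_{G}(u_{i}u_{j})-1)n+1,\ldots,\alpha_{G}(u_{i}u_{j})\cdot n$, and do this independently for every edge of $G$; this defines an edge-coloring $\alpha$ of $G[\overline K_{n}]$.

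Then I would verify two things. First, $\alpha$ is proper: two edges of $G[\overline K_{n}]$ meeting at a vertex $v_{p}^{(i)}$ come from edges $u_{i}u_{j},u_{i}u_{k}\in E(G)$; if $j\neq k$ then $\alpha_{G}(u_{i}u_{j})\neq\alpha_{G}(u_{i}u_{k})$, so their colors lie in disjoint blocks of the form $[(c-1)n+1,cn]$, and if $j=k$ the two edges lie in the same copy of $K_{n,n}$, where the coloring is proper by construction. Second, $\alpha$ is an interval coloring: since in each copy of $K_{n,n}$ every vertex meets all $n$ matchings, we get $S(v_{p}^{(i)},\alpha)=\bigcup_{c\in S(u_{i},\alpha_{G})}[(c-1)n+1,cn]$, and because $S(u_{i},\alpha_{G})$ is an interval this union equals the interval $[(\underline S(u_{i},\alpha_{G})-1)n+1,\ \overline S(u_{i},\alpha_{G})\cdot n]$, whose length is $d_{G}(u_{i})\cdot n=d_{G[\overline K_{n}]}(v_{p}^{(i)})$. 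Finally, every color in $[1,tn]$ appears, since $\alpha_{G}$ uses every color in $[1,t]$ and each block is fully realized. Hence $\alpha$ is an interval $(t\cdot n)$-coloring of $G[\overline K_{n}]$.

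To conclude, applying this construction to an interval $w(G)$-coloring of $G$ gives an interval $(w(G)\cdot n)$-coloring, so $w(G[\overline K_{n}])\leq w(G)\cdot n$; applying it to an interval $W(G)$-coloring of $G$ gives $W(G[\overline K_{n}])\geq W(G)\cdot n$. I do not expect a serious obstacle here; the only point needing a little care is the bookkeeping that consecutive colors of $\alpha_{G}$ yield consecutive blocks of length $n$, so that the per-vertex spectra glue into genuine intervals — this is precisely where continuity of $USE$ entered in Theorem \ref{mytheorem4}, and here it is automatic because the relevant sequence is constant. Alternatively, one may simply invoke Corollary \ref{mycorollary1} with $r=0$, regarding $\overline K_{n}$ as a (trivially interval colorable) $0$-regular graph; the argument above is just that specialization written out.
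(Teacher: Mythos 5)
Your argument is correct, but it is written out quite differently from the paper's. The paper's proof of this corollary is a two-line appeal to Theorem \ref{mytheorem4}: it declares that the edgeless graph $\overline K_{n}$ ``has an interval coloring $\alpha$ with $USE(V(\overline K_{n}),\alpha)=(0,\ldots,0)$,'' observes that this sequence is continuous with $\overline L=0$, and reads off the bounds $w(G)\cdot n+0$ and $W(G)\cdot n+0$. You instead carry out the underlying construction from scratch: for each edge $u_{i}u_{j}$ of $G$ you decompose the corresponding copy of $K_{n,n}$ into $n$ perfect matchings (via K\"onig's theorem, i.e.\ Theorem \ref{mytheorem3}) and colour them with the block $[(\alpha_{G}(u_{i}u_{j})-1)n+1,\ \alpha_{G}(u_{i}u_{j})\cdot n]$, then check properness, the interval property of the spectra, and surjectivity onto $[1,tn]$ — all of which you do correctly. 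What your version buys is rigour at the degenerate point: the paper's convention of assigning a ``coloring'' with spectrum $\{0\}$ to a graph with no edges is a slight abuse (there is no edge-coloring of $\overline K_{n}$, and the general machinery of Theorem \ref{mytheorem4} uses $\alpha_{H}$ and $d_{H}(w_{j})$ in its formulas), so writing the specialization out explicitly, or equivalently invoking Corollary \ref{mycorollary1} with $r=0$ as you note at the end, puts the claim on cleaner footing at the cost of a longer argument. The bounds you obtain are exactly those stated.
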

\begin{proof} We may assume that $\overline
K_{n}$ has an interval coloring $\alpha$ such that $USE(V(\overline
K_{n}),\alpha)=(0,\ldots,0)$. Since $USE(V(\overline
K_{n}),\alpha)=(0,\ldots,0)$ is continuous, by Theorem
\ref{mytheorem4}, $G[\overline K_{n}]\in \mathfrak{N}$. Moreover,
$w(G[\overline K_{n}])\leq w(G)\cdot n$ and $W(G[\overline
K_{n}])\geq W(G)\cdot n$. ~$\square$
\end{proof}

Fig. \ref{Example3} shows the interval $14$-coloring
$\alpha_{P_{3}[H]}$ of $P_{3}[H]$ described in the proof of Theorem \ref{mytheorem4}.\\

\begin{figure}[h]
\begin{center}
\includegraphics[width=40pc,height=20pc]{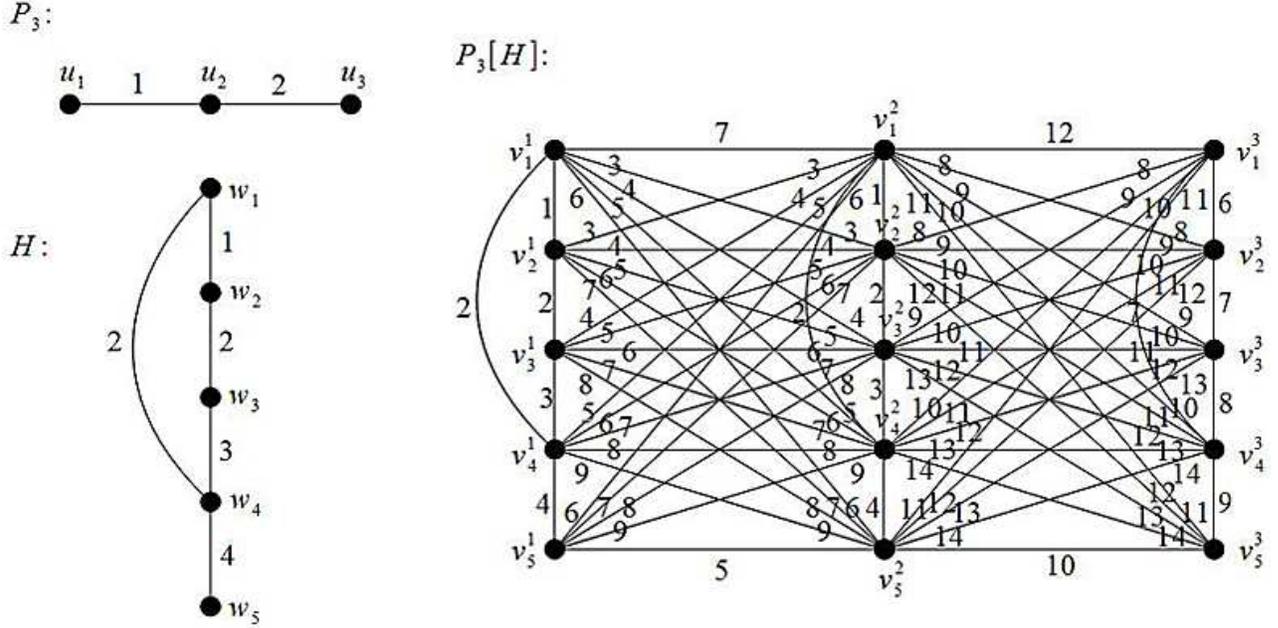}\\
\caption{The interval $14$-coloring $\alpha_{P_{3}[H]}$ of
$P_{3}[H]$.}\label{Example3}
\end{center}
\end{figure}

\bigskip

\section{Applications of the Main Result}\

This section is devoted to applications of the main result from the
previous section for some classes of graphs. We first consider
complete bipartite graphs.

\begin{theorem}
\label{mytheorem5} If $G\in \mathfrak{N}$, then $G[K_{m,n}]\in
\mathfrak{N}$ for any $m,n\in \mathbb{N}$. Moreover, for any $m,n\in
\mathbb{N}$, we have
\begin{center}
$w\left(G[K_{m,n}]\right)\leq (w(G)+1)(m+n)-1$ and
$W\left(G[K_{m,n}]\right)\geq (W(G)+1)(m+n)-1$.
\end{center}
\end{theorem}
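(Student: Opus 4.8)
The plan is to apply Theorem~\ref{mytheorem4} with $H=K_{m,n}$, so the whole task reduces to exhibiting an interval coloring $\alpha_{H}$ of $K_{m,n}$ whose upper spectral edge sequence $USE(V(K_{m,n}),\alpha_{H})$ is continuous, and then bookkeeping the numerical bounds. First I would recall from Theorem~\ref{mytheorem2} that $K_{m,n}\in\mathfrak{N}$ and, more importantly, that $K_{m,n}$ has an interval $t$-coloring for every $t$ with $m+n-\gcd(m,n)\le t\le m+n-1$; in particular it has an interval $(m+n-1)$-coloring $\alpha_{H}$. For such a coloring, every vertex $v$ satisfies $\overline S(v,\alpha_{H})-\underline S(v,\alpha_{H})=d(v)-1$, where $d(v)\in\{m,n\}$ depending on the side of the bipartition. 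Since the coloring uses exactly the colors $1,\dots,m+n-1$, a short argument shows that the largest color $m+n-1$ must appear at some vertex, forcing the spectra to "reach up" to $m+n-1$; the key claim I would verify is that in \emph{any} interval $(m+n-1)$-coloring of $K_{m,n}$ the upper endpoints $\overline S(v,\alpha_{H})$ already cover a full interval of integers, so that $USE(V(K_{m,n}),\alpha_{H})$ is continuous.

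The cleanest route to that claim is to use the standard interval $(m+n-1)$-coloring of $K_{m,n}$ explicitly. Label the parts $U=\{u_1,\dots,u_m\}$ and $V=\{v_1,\dots,v_n\}$ and color $u_iv_j$ with $i+j-1$; this is the well-known interval coloring of $K_{m,n}$ on colors $[1,m+n-1]$, in which $S(u_i,\alpha_H)=[i,\,i+n-1]$ and $S(v_j,\alpha_H)=[j,\,j+m-1]$. Then $\overline S(u_i,\alpha_H)=i+n-1$ ranges over $[n,m+n-1]$ as $i$ runs over $[1,m]$, and $\overline S(v_j,\alpha_H)=j+m-1$ ranges over $[m,m+n-1]$ as $j$ runs over $[1,n]$. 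The union of these two sets of upper endpoints is $[\min(m,n),\,m+n-1]$, which is an interval of integers, hence $USE(V(K_{m,n}),\alpha_H)$ is continuous. Moreover $\overline L=m+n-1$ for $L=USE(V(K_{m,n}),\alpha_H)$, and $|V(K_{m,n})|=m+n$.

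Now I would simply invoke Theorem~\ref{mytheorem4} with this $H=K_{m,n}$ and $n$ there replaced by $m+n$: it gives $G[K_{m,n}]\in\mathfrak{N}$ together with $w(G[K_{m,n}])\le w(G)\cdot(m+n)+\overline L=w(G)(m+n)+m+n-1=(w(G)+1)(m+n)-1$ and, identically, $W(G[K_{m,n}])\ge (W(G)+1)(m+n)-1$. That matches the stated bounds exactly.

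I expect the only real obstacle to be the continuity verification, i.e.\ being careful that the upper-endpoint multiset actually fills an interval rather than leaving a gap; using the explicit diagonal coloring $\alpha_H(u_iv_j)=i+j-1$ sidesteps any subtlety, since the endpoints $i+n-1$ and $j+m-1$ visibly sweep out overlapping intervals whose union is $[\min(m,n),m+n-1]$. Everything else — that $\alpha_{G[H]}$ from Theorem~\ref{mytheorem4} is well-defined and interval, and the arithmetic of the $w$/$W$ bounds — is immediate from the already-proved Theorem~\ref{mytheorem4}, so no further work is needed there.
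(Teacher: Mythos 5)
Your proposal is correct and follows essentially the same route as the paper: both use the explicit diagonal coloring $\alpha(u_iv_j)=i+j-1$ of $K_{m,n}$, observe that the upper endpoints of the spectra sweep out the interval $[\min(m,n),m+n-1]$ so that $USE(V(K_{m,n}),\alpha)$ is continuous, and then invoke Theorem~\ref{mytheorem4} with $\vert V(H)\vert=m+n$ and $\overline L=m+n-1$ to obtain the stated bounds. The speculative remark about arbitrary interval $(m+n-1)$-colorings is unnecessary (and unproven), but since you ultimately rely on the explicit construction, nothing is missing.
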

\begin{proof} Let $(U,V)$ be a bipartition of $K_{m,n}$, where
$U=\{u_{1},\ldots,u_{m}\}$ and $V=\{v_{1},\ldots,v_{n}\}$. Define an
edge-coloring $\alpha$ of $K_{m,n}$ as follows: for each edge
$u_{i}v_{j}\in E(K_{m,n})$, let $\alpha(u_{i}v_{j})=i+j-1$, where
$1\leq i\leq m$, $1\leq j\leq n$. Clearly, $\alpha$ is an interval
$(m+n-1)$-coloring of $K_{m,n}$. Moreover,
$S(u_{i},\alpha)=[i,i+n-1]$ for $1\leq i\leq m$ and
$S(v_{j},\alpha)=[j,j+m-1]$ for $1\leq j\leq n$. This implies that
$USE(U,\alpha)=(n,n+1,\ldots,m+n-1)$ and
$USE(V,\alpha)=(m,m+1,\ldots,m+n-1)$. Since
$USE(V\left(K_{m,n}\right),\alpha)$ is the union of $USE(U,\alpha)$
and $USE(V,\alpha)$, we obtain $USE(V\left(K_{m,n}\right),\alpha)$
is a continuous sequence. By Theorem \ref{mytheorem4},
$G[K_{m,n}]\in \mathfrak{N}$. Moreover, $w(G[K_{m,n}])\leq w(G)\cdot
(m+n)+m+n-1$ and $W(G[K_{m,n}])\geq W(G)\cdot (m+n)+m+n-1$.
~$\square$
\end{proof}

Next, we consider complete graphs of even order. Here we need one
result on interval colorings of complete graphs of even order. In
\cite{PetDM}, it was proved the following result.

\begin{theorem}
\label{mytheorem6} For any $n\in \mathbb{N}$, $K_{2n}$ has an
interval $(3n-2)$-coloring $\alpha$ such that for each $i\in [1,n]$,
there are vertices $v_{i}^{\prime },v_{i}^{\prime \prime}\in
V\left(K_{2n}\right)$ $\left(v_{i}^{\prime }\neq v_{i}^{\prime\prime
}\right)$ with $\underline S\left(v_{i}^{\prime },\alpha \right)
=\underline S\left(v_{i}^{\prime\prime },\alpha \right)=i$.
\end{theorem}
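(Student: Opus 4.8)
The plan is to prove Theorem~\ref{mytheorem6} by exhibiting an explicit interval $(3n-2)$-coloring of $K_{2n}$ with the stated pairing property. First I would recall the classical near-one-factorization idea used to edge-color $K_{2n}$: label the vertices $v_1,\ldots,v_{2n-1},v_{2n}$, think of $v_1,\ldots,v_{2n-1}$ as residues modulo $2n-1$ and $v_{2n}$ as a fixed ``center'', and build perfect matchings $M_1,\ldots,M_{2n-1}$ in the standard round-robin way, where $M_k$ pairs $v_k$ with $v_{2n}$ and pairs $v_{k+i}$ with $v_{k-i}$ (indices mod $2n-1$). This uses $2n-1$ colors and is a proper edge-coloring, but it is \emph{not} interval. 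The trick (as in \cite{PetDM}) is to relabel/renumber the matchings so that the color of each matching, restricted to a given vertex, forms a contiguous block; the cost of doing so is extra colors, and the target count $3n-2$ tells us how many: roughly $n$ colors are ``wasted'' relative to the optimal $2n-1$, and $3n-2 = (2n-1)+(n-1)$.

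The key steps, in order, would be: (1) construct a proper edge-coloring of $K_{2n}$ with colors in $[1,3n-2]$; the natural way is to split the vertex set into two halves $A=\{a_1,\ldots,a_n\}$ and $B=\{b_1,\ldots,b_n\}$, color the complete bipartite graph between $A$ and $B$ using the interval coloring of $K_{n,n}$ (e.g.\ $\alpha(a_ib_j)=i+j-1\in[1,2n-1]$, shifted appropriately), and color the two copies of $K_n$ inside $A$ and inside $B$ with colors drawn from the remaining range so that each vertex's spectrum glues into one interval; (2) carefully choose the color ranges for the internal $K_n$'s — one copy of $K_n$ using ``low'' colors below the bipartite band and the other using ``high'' colors above it — and verify that at each vertex of $A$ the colors from its $K_n$-edges are exactly the integers immediately below its bipartite spectrum, and similarly for $B$; (3) count that the overall palette is exactly $[1,3n-2]$ and that every spectrum is an interval, so $\alpha$ is an interval $(3n-2)$-coloring; (4) identify, for each $i\in[1,n]$, the two vertices $v_i',v_i''$ with $\underline S(v_i',\alpha)=\underline S(v_i'',\alpha)=i$: these should come out as one vertex from $A$ and its ``mirror'' vertex from $B$, because the symmetric construction forces the lower endpoints of spectra to repeat across the two halves. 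Alternatively, and probably more cleanly, I would avoid re-deriving everything and instead split into the even/odd internal structure: write $K_{2n}=K_{n,n}\cup (K_n\cup K_n)$ only if $n$ behaves well, otherwise proceed by direct round-robin with a shift.

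Actually, the cleanest route is likely inductive or via an explicit formula: define $\alpha$ on $V(K_{2n})=\{v_1,\ldots,v_{2n}\}$ so that $\underline S(v_i,\alpha)$ and $\overline S(v_i,\alpha)$ have closed forms, then just read off the pairing. One can try $\underline S(v_i,\alpha)=\lceil i/2\rceil$ for the first $2n$ vertices appropriately ordered, which automatically gives the pairs $(v_{2i-1},v_{2i})$ sharing lower endpoint $i$; the work is then to produce a proper edge-coloring realizing these prescribed lower endpoints while keeping $\overline S(v_i,\alpha)=\underline S(v_i,\alpha)+2n-1\le 3n-2$ for all $i$ and keeping every spectrum an interval.

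I expect the main obstacle to be step (2)/(3): simultaneously controlling \emph{both} endpoints of every vertex spectrum while the edge-coloring stays proper. In $K_{2n}$ every vertex has degree $2n-1$, so each spectrum is an interval of length exactly $2n-1$; thus the entire coloring is rigidly determined by the function $i\mapsto\underline S(v_i,\alpha)$, and one must check that the chosen function is actually \emph{realizable} by a proper coloring with palette $[1,3n-2]$ — this is a nontrivial compatibility/Latin-rectangle-type condition, not just a bookkeeping exercise. Verifying properness will require a careful case analysis on which ``band'' (low $K_n$, middle bipartite, high $K_n$) each edge falls into, and checking no color collision at a shared vertex across bands; that is where I would spend the bulk of the argument, and I would lean on Lemma~\ref{mylemma} (or Theorem~\ref{mytheorem2}/\ref{mytheorem3}) to color the bipartite part with prescribed spectral endpoints so that only the two $K_n$ blocks need hand-crafted colorings.
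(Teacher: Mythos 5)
The first thing to note is that this paper does not prove Theorem~\ref{mytheorem6} at all: it is quoted from \cite{PetDM} and used as a black box, so there is no internal proof to compare against. Judged on its own, your proposal is not yet a proof but a menu of three candidate strategies (relabelling a round-robin near-one-factorization, a low/middle/high band decomposition of $K_{2n}$ into two internal copies of $K_n$ plus a bipartite $K_{n,n}$, and prescribing $\underline S\left(v_i,\alpha\right)=\lceil i/2\rceil$ and hoping it is realizable). You correctly identify that the decisive step is realizability of the prescribed spectra by an actual proper edge-coloring, but you explicitly leave that step open (``that is where I would spend the bulk of the argument''), and that step is the entire content of the theorem --- the target value $3n-2$ and the pairing of lower endpoints are trivial bookkeeping once a construction exists.

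Worse, the one concrete scheme you describe provably fails. Suppose the cross edges are colored $a_ib_j\mapsto i+j-1+s$ and the clique inside $A$ receives only colors \emph{below} the bipartite band while the clique inside $B$ receives only colors above it. Every vertex of $K_{2n}$ has degree $2n-1$, so its spectrum is an interval of length exactly $2n-1$; the internal spectrum of $a_i$ must then be the $n-1$ consecutive colors immediately below $[i+s,\,i+s+n-1]$, i.e.\ $[i+s-n+1,\,i+s-1]$, which forces $\underline S\left(a_i,\alpha\right)=i+s-n+1$. These lower endpoints, together with $\underline S\left(b_j,\alpha\right)=j+s$, cover $2n$ \emph{distinct} values rather than each value of $[1,n]$ twice, and normalizing the smallest color to $1$ pushes $\overline S$ up to $4n-3>3n-2$. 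Equivalently, under any assignment in which some value $i$ is attained by only one vertex as a lower endpoint, color $i$ would lie in the spectrum of an odd number of vertices, which is impossible since each color class is a matching. So a correct construction must interleave the two cliques' colors with the bipartite band (both cliques contribute colors on both sides of each vertex's cross-spectrum, staggered so that the two vertices sharing lower endpoint $i$ sit in opposite halves), and verifying properness of such an interleaving is precisely the Latin-square-type argument your proposal defers. As written, the proposal does not establish the theorem.
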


Now we are able to prove our result on complete graphs of even
order.

\begin{theorem}
\label{mytheorem7} If $G\in \mathfrak{N}$, then $G[K_{2n}]\in
\mathfrak{N}$ for any $n\in \mathbb{N}$. Moreover, for any $n\in
\mathbb{N}$, we have
\begin{center}
$w\left(G[K_{2n}]\right)\leq (2\cdot w(G)+2)n-1$ and
$W\left(G[K_{2n}]\right)\geq (2\cdot W(G)+3)n-2$.
\end{center}
\end{theorem}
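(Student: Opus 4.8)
The plan is to apply Theorem~\ref{mytheorem4} to $H=K_{2n}$, so the whole task reduces to exhibiting an interval coloring $\alpha$ of $K_{2n}$ whose upper spectral edge $USE(V(K_{2n}),\alpha)$ is a continuous sequence, and then reading off the bounds. Theorem~\ref{mytheorem6} already hands us an interval $(3n-2)$-coloring $\alpha$ of $K_{2n}$ in which the lower spectra take each value $i\in[1,n]$ at least twice. First I would record that $K_{2n}$ is $(2n-1)$-regular, so for every vertex $v$ we have $\overline S(v,\alpha)-\underline S(v,\alpha)=d_{K_{2n}}(v)-1=2n-2$; hence $\overline S(v,\alpha)=\underline S(v,\alpha)+2n-2$, which means $USE(V(K_{2n}),\alpha)$ is just $LSE(V(K_{2n}),\alpha)\oplus(2n-2)$. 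Therefore it suffices to show $LSE(V(K_{2n}),\alpha)$ is continuous.

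Next I would pin down $LSE(V(K_{2n}),\alpha)$ exactly. Since $\alpha$ uses colors $1,\dots,3n-2$ and each vertex has $\underline S(v,\alpha)\in[1,n]$ (the largest possible smallest color on a $(2n-1)$-valent vertex in a $(3n-2)$-coloring is $(3n-2)-(2n-1)+1=n$, and color $1$ must appear so some vertex has smallest color $1$), every value in $[1,n]$ that occurs does so, and by Theorem~\ref{mytheorem6} each value $1,\dots,n$ is in fact attained. A counting check confirms there is no room for anything else: $2n$ vertices, $n$ possible values, each value used at least twice, so each value is used exactly twice and $LSE(V(K_{2n}),\alpha)=(1,1,2,2,\dots,n,n)$, which is continuous with $\underline L=1$, $\overline L=n$. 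Consequently $L:=USE(V(K_{2n}),\alpha)=(2n-1,2n-1,2n,2n,\dots,3n-2,3n-2)$ is continuous with $\overline L=3n-2$.

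Now Theorem~\ref{mytheorem4} applies with $|V(H)|=2n$ and $\overline L=3n-2$, giving $G[K_{2n}]\in\mathfrak{N}$ together with $w(G[K_{2n}])\le w(G)\cdot 2n+(3n-2)=(2w(G)+3)n-2$ and $W(G[K_{2n}])\ge W(G)\cdot 2n+(3n-2)=(2W(G)+3)n-2$. The stated lower bound $W(G[K_{2n}])\ge(2W(G)+3)n-2$ is exactly this; the stated upper bound $w(G[K_{2n}])\le(2w(G)+2)n-1$ is weaker than $(2w(G)+3)n-2$ precisely when $(2w(G)+3)n-2\le(2w(G)+2)n-1$, i.e. $n\le1$, so for $n\ge 2$ I would need a sharper interval coloring of $K_{2n}$ to meet it.

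That discrepancy is the main obstacle, and I expect the fix to be a second, smaller-width interval coloring of $K_{2n}$: instead of the $(3n-2)$-coloring from Theorem~\ref{mytheorem6}, use an interval $(2n-1)$-coloring (which exists since $K_{2n}$ is a $(2n-1)$-regular graph with $\chi'(K_{2n})=2n-1=\Delta(K_{2n})$, so Theorem~\ref{mytheorem1} gives $K_{2n}\in\mathfrak{N}$; concretely the standard one-factorization coloring has every vertex spectrum $[1,2n-1]$), whose $USE$ is the constant sequence $(2n-1,\dots,2n-1)$ — continuous, with $\overline L=2n-1$. Feeding this into Theorem~\ref{mytheorem4} yields $w(G[K_{2n}])\le w(G)\cdot 2n+(2n-1)=(2w(G)+2)n-1$, the claimed bound. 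So the proof has two halves: the $(2n-1)$-coloring of $K_{2n}$ delivers the $w$-bound, and the $(3n-2)$-coloring of Theorem~\ref{mytheorem6} (via the $LSE=(1,1,\dots,n,n)$ computation above) delivers the $W$-bound; in each case Theorem~\ref{mytheorem4} does the rest. The only genuinely delicate point is justifying $LSE(V(K_{2n}),\alpha)=(1,1,\dots,n,n)$ from Theorem~\ref{mytheorem6}, i.e. that \emph{no} smallest color exceeds $n$ and hence the "at least twice" of that theorem is "exactly twice"; everything else is bookkeeping.
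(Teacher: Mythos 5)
Your proof is correct and ends up taking essentially the same route as the paper: the $w$-bound comes from the $(2n-1)$-coloring of the regular graph $K_{2n}$ (the paper invokes Corollary~\ref{mycorollary1} for exactly this), and the $W$-bound comes from feeding the $(3n-2)$-coloring of Theorem~\ref{mytheorem6} into Theorem~\ref{mytheorem4}. Your explicit counting argument showing $LSE(V(K_{2n}),\alpha)=(1,1,\dots,n,n)$, hence that $USE$ is continuous, is a detail the paper leaves implicit, but it is the same underlying observation.
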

\begin{proof} By Corollary \ref{mycorollary1}, if $G\in \mathfrak{N}$, then $G[K_{2n}]\in
\mathfrak{N}$ and $w\left(G[K_{2n}]\right)\leq w(G)\cdot 2n+2n-1$
for any $n\in \mathbb{N}$.

Now we show that $W\left(G[K_{2n}]\right)\geq (2\cdot W(G)+3)n-2$.
By Theorem \ref{mytheorem6}, $K_{2n}$ has an interval
$(3n-2)$-coloring $\alpha$ such that for each $i\in [1,n]$, there
are vertices $v_{i}^{\prime },v_{i}^{\prime \prime}\in
V\left(K_{2n}\right)$ $\left(v_{i}^{\prime }\neq v_{i}^{\prime\prime
}\right)$ with $S\left(v_{i}^{\prime },\alpha \right) =
S\left(v_{i}^{\prime\prime },\alpha \right)=[i,i+2n-2]$. This
implies that
$USE(V(K_{2n}),\alpha)=(2n-1,2n-1,2n,2n,\ldots,3n-2,3n-2)$, which is
a continuous sequence. By Theorem \ref{mytheorem4}, $G[K_{2n}]\in
\mathfrak{N}$ and $W(G[K_{2n}])\geq W(G)\cdot 2n+3n-2$. ~$\square$
\end{proof}

A similar result also can be obtained for even cycles.

\begin{theorem}
\label{mytheorem8} If $G\in \mathfrak{N}$, then $G[C_{2n}]\in
\mathfrak{N}$ for any integer $n\geq 2$. Moreover, for any integer
$n\geq 2$, we have
\begin{center}
$w\left(G[C_{2n}]\right)\leq 2(w(G)\cdot n+1)$ and
$W\left(G[C_{2n}]\right)\geq (2\cdot W(G)+1)n+1$.
\end{center}
\end{theorem}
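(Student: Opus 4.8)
The plan is to apply Theorem \ref{mytheorem4} with $H=C_{2n}$, so the main task is to exhibit an interval coloring $\alpha$ of the even cycle $C_{2n}$ for which the sequence $USE(V(C_{2n}),\alpha)$ is continuous, and then to read off the resulting bounds. First I would recall that $C_{2n}$ is $2$-regular and bipartite, hence $\chi^{\prime}(C_{2n})=\Delta(C_{2n})=2$; in fact $C_{2n}$ has the obvious proper $2$-edge-coloring alternating colors $1$ and $2$ around the cycle, which is already an interval $2$-coloring in which every vertex has spectrum $[1,2]$. For that coloring $USE(V(C_{2n}),\alpha)=(2,2,\ldots,2)$, a (trivially) continuous sequence of length $2n$ with $\overline L = 2$. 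Applying Theorem \ref{mytheorem4} (equivalently Corollary \ref{mycorollary1} with $r=2$) gives $G[C_{2n}]\in\mathfrak{N}$ with $w(G[C_{2n}])\le w(G)\cdot 2n + 2 = 2(w(G)\cdot n+1)$, which is exactly the claimed upper bound on $w$.

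The lower bound $W(G[C_{2n}])\ge (2W(G)+1)n+1$ does not follow from the all-$2$'s coloring, since that only yields $W(G[C_{2n}])\ge W(G)\cdot 2n+2$, which is weaker. So the real work is to find a \emph{different} interval coloring $\alpha$ of $C_{2n}$ with a larger value of $\overline L = \overline{USE(V(C_{2n}),\alpha)}$ while keeping $USE(V(C_{2n}),\alpha)$ continuous. The natural candidate is an interval $W(C_{2n})$-coloring: it is known (and easy to check, cf.\ the cited work on cycles) that $W(C_{2n}) = n+1$, realized by a coloring in which the colors increase along the cycle. Concretely, label the vertices $x_1,x_2,\ldots,x_{2n}$ cyclically and color the path edges $x_1x_2,x_2x_3,\ldots$ with colors $1,2,\ldots,2n-1$ except that one uses the extra color to stretch the range: a standard construction gives an interval $(n+1)$-coloring of $C_{2n}$ in which the spectra are $[1,2],[2,3],\ldots,[n,n+1]$ and then $[n,n+1],\ldots,[2,3],[1,2]$ coming back, so that each value in $\{2,3,\ldots,n+1\}$ occurs exactly twice among the upper endpoints. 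Then $USE(V(C_{2n}),\alpha) = (2,2,3,3,\ldots,n+1,n+1)$, which is continuous, with $\overline L = n+1$. Plugging this into Theorem \ref{mytheorem4} yields $W(G[C_{2n}]) \ge W(G)\cdot 2n + (n+1) = (2W(G)+1)n+1$, as desired.

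I would therefore organize the proof in two short steps: (i) invoke the trivial $2$-coloring (or Corollary \ref{mycorollary1}) for the $w$-bound; (ii) construct the spread-out interval $(n+1)$-coloring of $C_{2n}$ explicitly, verify that it is proper, that every spectrum is an interval, and that the multiset of upper endpoints is $\{2,2,3,3,\ldots,n+1,n+1\}$ — hence $USE$ is continuous — and then apply Theorem \ref{mytheorem4} for the $W$-bound. The main obstacle is purely bookkeeping: writing down the $(n+1)$-coloring of $C_{2n}$ cleanly (deciding where the colors "turn around" on the cycle) and confirming the upper-spectral-endpoint sequence is exactly the doubled run $2,3,\ldots,n+1$; once that coloring is in hand, everything else is an immediate substitution into Theorem \ref{mytheorem4}. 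The condition $n\ge 2$ is needed so that $C_{2n}$ is a simple graph (no multiple edges), which is why the statement excludes $n=1$.
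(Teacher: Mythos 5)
Your proposal is correct and follows essentially the same route as the paper: Corollary \ref{mycorollary1} (the $2$-regular case) for the bound on $w$, and a ``spread-out'' interval $(n+1)$-coloring of $C_{2n}$ with spectra $[1,2],[2,3],\ldots,[n,n+1]$ mirrored back, so that $USE(V(C_{2n}),\alpha)=(2,2,3,3,\ldots,n+1,n+1)$ is continuous, fed into Theorem \ref{mytheorem4} for the bound on $W$. The only detail you defer --- the explicit formula for that coloring --- is exactly what the paper supplies ($\alpha(v_{i}v_{i+1})=\alpha(v_{2n+1-i}v_{2n-i})=i+1$ for $1\leq i\leq n$ and $\alpha(v_{1}v_{2n})=1$), and your description of the spectra pins it down correctly.
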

\begin{proof} By Corollary \ref{mycorollary1}, if $G\in \mathfrak{N}$, then $G[C_{2n}]\in
\mathfrak{N}$ and $w\left(G[C_{2n}]\right)\leq w(G)\cdot 2n+2$ for
any integer $n\geq 2$.

Now we show that $W\left(G[C_{2n}]\right)\geq (2\cdot W(G)+1)n+1$.
Let $V(C_{2n})=\{v_{1},\ldots,v_{2n}\}$ and $E(C_{2n}) =\{
v_{i}v_{i+1}\colon\,1\leq i\leq 2n-1\}\cup \{v_{1}v_{2n}\}$. Define
an edge-coloring $\alpha$ of $C_{2n}$ as follows: for $1\leq i\leq
n$, let $\alpha(v_{i}v_{i+1})=\alpha(v_{2n+1-i}v_{2n-i})=i+1$ and
$\alpha(v_{1}v_{2n})=1$. Clearly, $\alpha$ is an interval
$(n+1)$-coloring of $C_{2n}$ such that for each $i\in [1,n]$,
$S\left(v_{i},\alpha \right) = S\left(v_{2n+1-i},\alpha
\right)=[i,i+1]$. This implies that
$USE(V(C_{2n}),\alpha)=(2,2,3,3,\ldots,n+1,n+1)$, which is a
continuous sequence. By Theorem \ref{mytheorem4}, $G[C_{2n}]\in
\mathfrak{N}$ and $W(G[C_{2n}])\geq W(G)\cdot 2n+n+1$. ~$\square$
\end{proof}

Finally, we show that every tree $T$ has an interval coloring
$\alpha$ such that $USE(V(T),\alpha)$ is continuous.

\begin{theorem}
\label{mytheorem8} If $T$ is a tree, then it has an interval
coloring $\alpha$ such that $USE(V(T),\alpha)$ is continuous.
\end{theorem}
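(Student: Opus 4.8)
The plan is to build the interval coloring of $T$ by a root-to-leaves (BFS/DFS) traversal, maintaining the invariant that at each vertex the colors used so far on the already-colored edges form a \emph{terminal} segment of that vertex's final spectrum, i.e. they are the \emph{largest} colors, so that $\overline S(v,\alpha)$ is determined as soon as we color the edge from $v$ to its parent. Concretely, I would root $T$ at an arbitrary vertex $r$. It is a standard fact (see e.g. \cite{Kampreprint,KamDiss}, and it follows easily from Theorem \ref{mytheorem3} applied locally) that a tree admits an interval coloring; the point here is to choose one with the extra structural property on $USE(V(T),\alpha)$. I would construct $\alpha$ so that every vertex $v$ at depth $k$ from the root has $\overline S(v,\alpha)$ equal to some prescribed value depending only on $k$ and on the degrees along the root path — the key is that the multiset $\{\overline S(v,\alpha): v\in V(T)\}$ fills up an interval of integers.

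The key steps, in order: (1) Fix the root $r$ and assign $\overline S(r,\alpha)$ — say color the $d_T(r)$ edges at $r$ with colors $1,2,\dots,d_T(r)$, so $S(r,\alpha)=[1,d_T(r)]$ and $\overline S(r,\alpha)=d_T(r)$. (2) Process vertices in BFS order; when we reach a non-root vertex $v$ whose parent edge has already received color $c$, we must color the $d_T(v)-1$ downward edges at $v$. Give them the consecutive colors forming the interval $[c-d_T(v)+1,\,c-1]\cup[c+1,\,c]$ appropriately — more precisely, choose the spectrum of $v$ to be the interval of length $d_T(v)$ whose \emph{top} end is $c$ when $v$ is not on a distinguished "spine," so that $\overline S(v,\alpha)=c$, the color of its parent edge. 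Then the downward edges get colors $c-d_T(v)+1,\dots,c-1$ (all distinct from $c$ and from each other, and no conflict since $v$'s other incident edges are exactly its children's parent edges). (3) Check that $\alpha$ is a proper interval coloring: properness is immediate because at every vertex all incident edges get distinct colors by construction, and the spectrum at each vertex is an interval by construction. (4) Finally, verify that $USE(V(T),\alpha)$ is continuous. Here we observe $\overline S(r,\alpha)=d_T(r)$, and for every non-root $v$, $\overline S(v,\alpha)=c$ where $c$ is a color appearing at the parent, hence $c\in S(\text{parent},\alpha)$, which is an interval; by walking down from the root one sees the set of values $\overline S(v,\alpha)$ is exactly $\bigcup_{v}S(v,\alpha)$-type intervals glued along shared colors, so it covers $[\,\min_v \overline S(v,\alpha),\ \max_v \overline S(v,\alpha)\,]$ with no gaps.

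The main obstacle is step (4): ensuring \emph{continuity} of $USE$, i.e. that no integer between the smallest and largest upper spectral values is skipped. This forces a careful choice in step (2) of which color in the parent's interval becomes the child's $\overline S$ value — one cannot just always take the parent-edge color blindly; rather, one should route the coloring so that along a path from the root the upper-spectral values step through consecutive integers, and branching vertices reuse values already realized. I expect the cleanest way to guarantee this is to pick, at the root, a leaf-to-leaf longest path (a "diameter path") or simply to argue by induction on $|V(T)|$: remove a leaf $\ell$ with neighbor $w$, apply the inductive hypothesis to $T-\ell$ to get $\alpha'$ with $USE(V(T-\ell),\alpha')$ continuous, then extend to $\ell$ by coloring $w\ell$ with a color that keeps $S(w,\alpha)$ an interval and sets $\overline S(\ell,\alpha)$ equal to that color, which already lies in $S(w,\alpha)\subseteq[\underline L',\overline L']$, so the updated $USE$ is still continuous (one only inserts a value already sandwiched inside the existing interval, or extends it by one at an end). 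The base case $|V(T)|=1$ is trivial. This inductive route sidesteps the global bookkeeping and makes the continuity check local at each leaf addition.
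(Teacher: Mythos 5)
Your fallback plan --- induction on $|V(T)|$ by removing a leaf $\ell$ with neighbor $w$ --- is the same basic strategy as the paper's, but your inductive step has a genuine gap: the invariant ``$USE(V(T-\ell),\alpha')$ is continuous'' is too weak to be propagated. Your justification rests on the claim that the new color lies in $S(w,\alpha)\subseteq[\underline L',\overline L']$, but that containment is false in general ($S(w,\alpha')$ can reach below $\min_{v}\overline S(v,\alpha')$), and in fact the step can fail outright. Concretely, let $T-\ell$ have vertices $w,p,q,r,p',r'$ with edges $wp,wq,wr,pp',rr'$ colored $1,2,3,2,4$ respectively. This is an interval $4$-coloring, and $USE=(2,2,2,3,4,4)$ is continuous, so it is a legitimate output of your inductive hypothesis. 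Now attach the leaf $\ell$ to $w$. To keep $S(w)$ an interval you must color $w\ell$ with $0$ or $4$: color $0$ gives the $USE$ set $\{0,2,3,4\}$ (gap at $1$), while color $4$ raises $\overline S(w)$ to $4$ and gives the set $\{2,4\}$ (gap at $3$, since $w$ was the unique vertex with upper spectral value $3$). So neither admissible extension preserves continuity, and the induction stalls.

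The paper repairs exactly this by strengthening the invariant and removing the choice. It grows the tree leaf by leaf, maintaining not merely that $USE$ is continuous but that \emph{every} color in the full range $[a_i,b_i]$ of colors used so far occurs in $USE(V(T_i),\alpha_i)$ (note that my counterexample violates this: color $1$ is used but absent from $USE$). It starts from a single edge colored with the large value $|E(T)|$ to leave room below, and always colors the new pendant edge $uw$ with $\underline S(w,\alpha_{m-1})-1$, never with $\overline S(w,\alpha_{m-1})+1$. The downward choice leaves every old upper spectral value untouched and contributes the new value $\underline S(w,\alpha_{m-1})-1\geq a_{m-1}-1$, which under the stronger invariant is either already covered or extends the covered range by exactly one at the bottom; a final shift normalizes the colors to start at $1$. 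If you add that stronger invariant and commit to the downward extension, your induction goes through; as written, it does not. (Your primary BFS construction may also be salvageable, but you correctly identified that its continuity claim in step (4) is unproved, so it cannot carry the theorem as stated.)
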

\begin{proof}
Let $T$ be a tree with $\vert V(T)\vert =n$ ($n\geq 2$). We prove
the theorem by induction on $\vert E(T)\vert$. We will construct
tree $T$ starting from some $v_{1}v_{2}$ edge and adding a new leaf
on each step. For $1\leq i\leq n-1$, we denote by $T_{i}$ the tree
obtained on step $i$ and by $\alpha_{i}$ its edge-coloring. For a
tree $T_{i}$ and its edge-coloring $\alpha_{i}$ ($1\leq i\leq n-1$),
define numbers $a_{i}$ and $b_{i}$ as follows:
\begin{center}
$a_{i}=\min_{e\in E(T_{i})} \alpha_{i}(e)$ and $b_{i}=\max_{e\in
E(T_{i})} \alpha_{i}(e)$.
\end{center}

We show that in each step $T_{i}$ and $\alpha_{i}$ satisfy the
following two conditions:

\begin{description}
\item[(1)] for each $v\in V(T_{i})$, $S\left(v,\alpha_{i}\right)$ is an interval of integers;

\item[(2)] each color of the interval $\left[a_{i},b_{i}\right]$ appears in
$USE\left(V(T_{i}),\alpha_{i}\right)$.
\end{description}

Let $V(T_{1})=\{v_{1},v_{2}\}$ and $E(T_{1})=\{v_{1}v_{2}\}$. Define
an edge-coloring $\alpha_{1}$ of $T_{1}$ as follows:
$\alpha_{1}(v_{1}v_{2})=\vert E(T)\vert$. Since
$S\left(v_{1},\alpha_{1}\right)=S\left(v_{2},\alpha_{1}\right)=\{\vert
E(T)\vert\}$, we have $a_{1}=b_{1}=\vert E(T)\vert$ and
$USE\left(V(T_{1}),\alpha_{1}\right)=(\vert E(T)\vert,\vert
E(T)\vert)$. This implies that $(1)$ and $(2)$ hold for $T_{1}$.
Suppose that $n\geq 3$, $(1)$ and $(2)$ are satisfied for a tree
$T_{m-1}$ and its edge-coloring $\alpha_{m-1}$, and prove that $(1)$
and $(2)$ are also satisfied for a tree $T_{m}$ and its
edge-coloring $\alpha_{m}$ ($2\leq m\leq n-1$). Let $u$ be the
pendant vertex that should be added to $T_{m-1}$ to get $T_{m}$.
Also, let $uw\in E(T_{m})$, where $w\in V(T_{m-1})$.\\

Define an edge-coloring $\alpha_{m}$ of $T_{m}$ as follows: for
every $e\in E(T_{m})$, let

\begin{center}
$\alpha_{m}(e)=\left\{
\begin{tabular}{ll}
$\alpha_{m-1}(e)$, & if $e\in E(T_{m-1})$,\\
$\underline S\left(w,\alpha_{m-1}\right)-1$, & if $e=uw$.\\
\end{tabular}%
\right.$
\end{center}

By the definition of $\alpha_{m}$, we have:

\begin{description}
\item[1)] for each $v\in V(T_{m})$, $S\left(v,\alpha_{m}\right)$ is an interval of integers;

\item[2)] for $v\in V(T_{m-1})$, $\overline
S\left(v,\alpha_{m}\right)=\overline S\left(v,\alpha_{m-1}\right)$
and $USE\left(V(T_{m}),\alpha_{m}\right)$ is the union of
$USE\left(V(T_{m-1}),\alpha_{m-1}\right)$ and
$\left(\alpha_{m}(uw)\right)$;

\item[3)] $a_{m}=\min \{a_{m-1},\alpha_{m}(uw)\}, b_{m}=b_{m-1}$ and
$\alpha_{m}(uw)=\underline S\left(w,\alpha_{m-1}\right)-1\geq
a_{m-1}-1$.
\end{description}

By $1),2)$ and $3)$, and taking into account that each color of the
interval $\left[a_{m-1},b_{m-1}\right]$ appears in
$USE\left(V(T_{m-1}),\alpha_{m-1}\right)$, we obtain that each color
of the interval $\left[a_{m},b_{m}\right]$ appears in
$USE\left(V(T_{m}),\alpha_{m}\right)$. This implies that $(1)$ and
$(2)$ also hold for $T_{m}$. So, taking $m=n-1$, we get that
$T=T_{n-1}$. Finally, define an edge-coloring $\alpha$ of $T$ as
follows: for every $e\in E(T)$, let
$\alpha(e)=\alpha_{n-1}(e)-a_{n-1}+1$. It is not difficult to see
that $\alpha$ is an interval $\left(\vert E(T)\vert -
a_{n-1}+1\right)$-coloring of $T$ such that $USE(V(T),\alpha)$ is
continuous. ~$\square$
\end{proof}

\begin{corollary}
\label{mycorollary3} If $G\in \mathfrak{N}$ and $T$ is a tree, then
$G[T]\in \mathfrak{N}$.
\end{corollary}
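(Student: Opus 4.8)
The plan is to obtain this as an immediate consequence of the main theorem together with the structural fact about trees established just above, so the argument is a two-step deduction requiring no new construction. First I would invoke Theorem~\ref{mytheorem8}: every tree $T$ has an interval coloring $\alpha$ for which $USE(V(T),\alpha)$ is a continuous sequence. This is precisely the hypothesis that Theorem~\ref{mytheorem4} imposes on the second factor $H$.

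Second, since $G\in\mathfrak{N}$ by assumption and $H=T$ carries the coloring $\alpha$ supplied by the previous step, I would apply Theorem~\ref{mytheorem4} with this $H$ and $\alpha_{H}=\alpha$ to conclude that $G[T]\in\mathfrak{N}$. If one additionally wants the quantitative estimates, put $n=|V(T)|$ and $L=USE(V(T),\alpha)$; then Theorem~\ref{mytheorem4} yields $w(G[T])\le w(G)\cdot n+\overline L$ and $W(G[T])\ge W(G)\cdot n+\overline L$, where $\overline L$ is the largest color assigned by $\alpha$ to an edge incident to a vertex of $T$.

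There is essentially no obstacle left at this point: all the difficulty has been absorbed into Theorem~\ref{mytheorem8}, whose proof grows the tree one leaf at a time while maintaining both that each vertex spectrum remains an interval and that the upper spectral edge covers the whole color interval $[a_i,b_i]$. The only minor caveat is that Theorem~\ref{mytheorem8} is stated for trees on at least two vertices; the degenerate tree $T=K_1$ is disposed of by noting that $G[K_1]\cong G\in\mathfrak{N}$.
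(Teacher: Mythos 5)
Your proposal is correct and is exactly the argument the paper intends: the corollary is stated without proof precisely because it follows immediately by feeding the coloring from Theorem~\ref{mytheorem8} into Theorem~\ref{mytheorem4}. Your extra remark disposing of the degenerate case $T=K_{1}$ is a sensible (and harmless) addition that the paper omits.
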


\bigskip

\section{Concluding Remarks}\

In the previous sections it was proved that if $G\in \mathfrak{N}$
and $H$ has an interval coloring $\alpha_{H}$ such that
$USE(V(H),\alpha_{H})$ is continuous, then $G[H]\in \mathfrak{N}$.
Unfortunately, not all interval colorable graphs have such a special
interval coloring. For example, if we consider the complete
tripartite graph $K_{1,1,2n}$ ($n\geq 2$), then it is not difficult
to see that for every interval coloring $\alpha$ of $K_{1,1,2n}$
($n\geq 2$), $USE(V(K_{1,1,2n}),\alpha)$ is not continuous. This
implies that the problem on interval colorability of the composition
of interval colorable graphs still remains open.

\bigskip

\begin{acknowledgement}
We would like to thank Hrant Khachatrian for his constructive
suggestions on improvements of the paper.
\end{acknowledgement}

\bigskip


\begin{thebibliography}{99}

\bibitem{AsrKam} A.S. Asratian, R.R. Kamalian, Interval colorings of edges of a
multigraph, Appl. Math. 5 (1987) 25-34 (in Russian).

\bibitem{AsrKamJCTB} A.S. Asratian, R.R. Kamalian, Investigation on interval
edge-colorings of graphs, J. Combin. Theory Ser. B 62 (1994) 34-43.

\bibitem{AsrDenHag} A.S. Asratian, T.M.J. Denley, R. Haggkvist, Bipartite Graphs and
their Applications, Cambridge University Press, Cambridge, 1998.

\bibitem{BaranSas} Z. Baranyai, Gy. R. Szasz, Hamiltonian decomposition of
lexicographic product, J. Combin. Theory Ser. B 31 (1981) 253-261.

\bibitem{Berge} C. Berge, Theorie des Graphes et ses Applications, Dunod, Paris,
1958 (in French).

\bibitem{GiaroKubale1} K. Giaro, M. Kubale, Consecutive edge-colorings of
complete and incomplete Cartesian products of graphs, Cong, Num. 128
(1997) 143-149.

\bibitem{GiaroKubale2} K. Giaro, M. Kubale, Compact scheduling of zero-one time operations
in multi-stage systems, Discrete Appl. Math. 145 (2004) 95-103.

\bibitem{HamImrichKlavzar} R. Hammack, W. Imrich, S. Klavzar, Handbook of Product Graphs,
Second Edition, CRC Press, 2011.

\bibitem{Hansen} H.M. Hansen, Scheduling with minimum waiting periods, MSc
Thesis, Odense University, Odense, Denmark, 1992 (in Danish).

\bibitem{Harary} F. Harary, Graph Theory, Addison-Wesley, Reading MA, 1969.

\bibitem{HarKainSchewk} F. Harary, P.C. Kainen, A.J. Schwenk,
Toroidal graphs with arbitrarily high crossing numbers, Nanta Math.
6 (1973) 58-67.

\bibitem{Hedetniemi} S. Hedetniemi, Homomorphisms of graphs and automata, Doctoral
Dissertation, University of Michigan, 1966.

\bibitem{HimmelWilliam} P.E. Himmelwright, J.E. Williamson, On 1-factorability and
edge-colorability of cartesian products of graphs, Elem. Der Math.
29 (1974) 66-67.

\bibitem{Jaradat} M.M.M. Jaradat, On the Anderseon-Lipman conjecture and some related
problems, Discrete Math. 297 (2005) 167-173.

\bibitem{JensenToft} T.R. Jensen, B. Toft, Graph Coloring Problems, Wiley Interscience
Series in Discrete Mathematics and Optimization, 1995.

\bibitem{Kampreprint} R.R. Kamalian, Interval colorings of complete bipartite graphs
and trees, preprint, Comp. Cen. of Acad. Sci. of Armenian SSR,
Yerevan, 1989 (in Russian).

\bibitem{KamDiss} R.R. Kamalian, Interval edge colorings of graphs, Doctoral
Thesis, Novosibirsk, 1990.

\bibitem{Konig} D. K\"onig, \"Uber Graphen und ihre Anwendung auf
Determinantentheorie und Mengenlehre. Math. Ann. 77 (1916) 453-465.

\bibitem{Kotzig} A. Kotzig, 1-Factorizations of cartesian products of regular
graphs, J. Graph Theory 3 (1979) 23-34.

\bibitem{Kubale} M. Kubale, Graph colorings, American Mathematical Society, 2004.

\bibitem{Mahamoodian} E.S. Mahamoodian, On edge-colorability of cartesian products of
graphs, Canad. Math. Bull. 24 (1981) 107-108.

\bibitem{MoharPisanski} B. Mohar, T. Pisanski, Edge-coloring of a family of regular graphs,
Publ. Inst. Math. (Beograd) 33 (47) (1983) 157-162.

\bibitem{Mohar} B. Mohar, On edge-colorability of products of graphs, Publ. Inst.
Math. (Beograd) 36 (50) (1984) 13-16.

\bibitem{PetDM} P.A. Petrosyan, Interval edge-colorings of complete graphs and
$n$-dimensional cubes, Discrete Math. 310 (2010) 1580-1587.

\bibitem{PetDMGT} P.A. Petrosyan, Interval edge colorings of some products of graphs,
Discuss. Math. Graph Theory 31(2) (2011) 357-373.

\bibitem{PetKarapet} P.A. Petrosyan, G.H. Karapetyan, Lower bounds for the greatest
possible number of colors in interval edge colorings of bipartite
cylinders and bipartite tori, in: Proceedings of the CSIT Conference
(2007) 86-88.

\bibitem{PetKhachTan} P.A. Petrosyan, H.H. Khachatrian, H.G. Tananyan, Interval
edge-colorings of Cartesian products of graphs I, Discuss. Math.
Graph Theory 33(3) (2013) 613-632.

\bibitem{PetKhachYepTan} P.A. Petrosyan, H.H. Khachatrian, L.E. Yepremyan, H.G. Tananyan,
Interval edge-colorings of graph products, in: Proceedings of the
CSIT Conference (2011) 89-92.

\bibitem{PisanSHTMohar} T. Pisanski, J. Shawe-Taylor, B. Mohar, 1-Factorization of the
composition of regular graphs, Publ. Inst. Math. (Beograd) 33 (47)
(1983) 193-196.

\bibitem{Sabidussi} G. Sabidussi, Graph multiplication, Math. Z. 72 (1960) 446-457.

\bibitem{Seva} S.V. Sevast'janov, Interval colorability of the edges of a
bipartite graph, Metody Diskret. Analiza 50 (1990) 61-72 (in
Russian).

\bibitem{Vizing} V.G. Vizing, The Cartesian product of graphs, Vych. Sis. 9 (1963)
30-43 (in Russian).

\bibitem{Vizingproblems} V.G. Vizing, Some unsolved problems in graph theory, Uspehi Mat.
Nauk 23 (1968) 117-134 (in Russian).

\bibitem{Zhou} M.K. Zhou, Decomposition of some product graphs into 1-factors and
Hamiltonian cycles, Ars Combin. 28 (1989) 258-268. 23.

\bibitem{West} D.B. West, Introduction to Graph Theory, Prentice-Hall, New
Jersey, 2001.

\end{thebibliography}
\end{document}